\newtheorem{theorem}{Theorem}
\newtheorem{lemma}[theorem]{Lemma}
\newtheorem{corollary}[theorem]{Corollary}
\newtheorem{exmp}[theorem]{Example}
\newtheorem{remark}[theorem]{Remark}
\newtheorem{example}[theorem]{Example}
\newcommand{\bsgamma}{{\boldsymbol{\gamma}}}
\newcommand{\bst}{{\boldsymbol{t}}}
\newcommand{\R}{\mathbb{R}}
\newcommand{\N}{\mathbb{N}}
\newcommand{\Z}{\mathbb{Z}}
\newcommand{\mask}[1]{}
\newcommand{\by}{{\mathbf y}}
\newcommand{\bg}{{\boldsymbol{\gamma}}}
\newcommand{\X}{{\mathfrak X}}
\newcommand{\nn}{\ast}
\newcommand{\kl}{K_{\rm{low}}}
\newcommand{\kld}{K^d_{\rm{low}}}
\newcommand{\ku}{K_{\rm{up}}}
\newcommand{\kud}{K^d_{\rm{up}}}
\newcommand{\kua}{K^{\alpha}_{\rm{up}}}
\DeclareMathOperator{\op}{op}
\DeclareMathOperator{\Prob}{{\bf P}}
\DeclareMathOperator{\Expec}{{\bf E}}
\DeclareMathOperator{\cardet}{card^{det}}
\DeclareMathOperator{\carran}{card^{ran}}
\DeclareMathOperator{\bx}{{\bf x}}
\DeclareMathOperator{\bl}{{ \bf l}}
\DeclareMathOperator{\supp}{{supp}}
\DeclareMathOperator{\decay}{decay}
\DeclareMathOperator{\ran}{ran}
\DeclareMathOperator{\lin}{lin}
\DeclareMathOperator{\cost}{cost}
\DeclareMathOperator{\deter}{det}
\title{Explicit error bounds for randomized {S}molyak algorithms and an application to infinite-dimensional integration}
\author{Michael Gnewuch\thanks{Institut f\"ur Mathematik, Universit\"at Osnabr\"uck,
Germany ({\tt michael.gnewuch@uni-osnabrueck.de})}
\and Marcin Wnuk\thanks{Mathematisches Seminar, Christian-Albrechts-Universit\"at zu Kiel,
Germany ({\tt wnuk@math.uni-kiel.de}).}}
\begin{document}


\date{\today}

\maketitle

\begin{abstract}
Smolyak's method, also known as hyperbolic cross approximation or sparse grid method, is a powerful 
tool to tackle multivariate tensor product problems solely
with the help of efficient algorithms for the corresponding univariate problem.

In this paper we study the randomized setting, i.e., we randomize Smolyak's method.
We provide upper and lower error bounds for randomized Smolyak algorithms with explicitly given dependence on the number of variables and the number of information evaluations used.
The error criteria we consider are the worst-case root mean square error (the typical error criterion for randomized algorithms, often referred to as ``randomized error'') and the root mean square worst-case error (often referred to as ``worst-case error'').

Randomized Smolyak algorithms can be used as building blocks for efficient methods such as multilevel algorithms, multivariate decomposition methods or dimension-wise quadrature methods to tackle successfully high-dimensional or even infinite-dimensional problems. As an example, we provide a very general and sharp result on the convergence rate of $N$-th minimal errors of infinite-dimensional integration on weighted reproducing kernel Hilbert spaces. Moreover, we are able to characterize the spaces for which randomized algorithms  for infinte-dimensional integration are superior to deterministic ones.  We illustrate our findings for the special instance of weighted Korobov spaces. We indicate how these results can be extended, e.g., to spaces of functions whose smooth dependence on successive variables increases (``spaces of increasing smoothness'') and to the problem of $L^2$-approximation (function recovery).
\end{abstract}

\section{Introduction}\label{SECT1}

Smolyak's method or algorithm, also known as  sparse grid method, hyperbolic cross approximation, Boolean method, combination technique or discrete blending method,  was outlined by Smolyak in \cite{Smo63}. It is a general method to treat multivariate tensor product problems. Its major advantage is the following: to tackle a multivariate tensor product problem at hand one only has to understand the corresponding univariate problem. More precisely, Smolyak's algorithm uses algorithms for the corresponding univariate problem as building blocks, and it is fully determined by the choice of those algorithms. If those algorithms for the univariate problem are optimal, then typically Smolyak's algorithm  for the multivariate problem is almost optimal, i.e., its convergence rate is optimal up to logarithmic factors.

Today Smolyak's method is widely used in scientific computing and there exists a huge number of scientific articles dealing with applications and modifications of it.
A  partial list of papers (which is, of course, very far from being complete) on \emph{deterministic Smolyak algorithms} may contain, e.g., the articles
\cite{WW95, WW99} for general approximation problems,  \cite{Gen74, Del90, Tem92, BD93, FH96, NR96, GG98, GG03, Pet03, GLSS07, HHPS18} for numerical integration, \cite{Gor71, Del82, Tem87, Tem93, SU07, Ull08, DG16} for function recovery, and \cite{Per86, Zen91, NR96a, Yse05, Yse06, Gar07, GH09} for other applications.
Additional references and further information may be found in the survey articles \cite{BG04, Gri06}, the book chapters \cite[Chapter~15]{NW10}, \cite[Chapter~4]{Tem18}, and the books \cite{DS89, DTU18}.



On \emph{randomized Smolyak algorithms} much less is known. Actually, we are only aware of two articles that deal with randomized versions of Smolyak's method, namely \cite{DLP07} and  \cite{HM11}.  In \cite{DLP07} Dick et al. investigate a specific instance of the randomized Smolyak method and use it as a tool to show that higher order nets may be used to construct integration algorithms achieving almost optimal order of convergence (up to logarithmic factors) of the worst case error in certain Sobolev spaces. In \cite{HM11} Heinrich and Milla employ the randomized Smolyak method as a building block of an algorithm to compute antiderivatives of functions from $L^p([0,1]^d),$ allowing for fast computation of antiderivative values for any point in $[0,1]^d.$ Note that in both cases the randomized Smolyak method is applied as an ad hoc device and none of the papers gives a systematic treatment of its properties.

With this paper we want to start a systematic treatment of randomized Smolyak algorithms.
Similar to the paper \cite{WW95}, where deterministic Smolyak methods were studied, we discuss the randomized Smolyak method for general linear approximation problems on
tensor products of Hilbert spaces. Examples of such approximation problems are numerical integration or  $L^2$-approximation, i.e., function recovery.

The error criteria for randomized algorithms or, more generally, randomized operators that we consider are extensions of the worst-case error for deterministic algorithms. The first error criterion is the worst-case root mean square error, often referred to as ``randomized error''.  This error criterion is typically used to assess the quality of randomized algorithms.
The second error criterion is the root mean square worst-case error, often referred to as ``worst-case error''. This quantity is commonly used to prove the existence of a good \emph{deterministic} algorithm with the help of the ``pidgeon hole principle'':
It arises as an average of the usual deterministic worst case error over a set of deterministic algorithms $\mathcal{A}$ endowed with a probability measure $\mu$. If the average is small, there exists at least one algorithm in  $\mathcal{A}$ with small worst-case error, see, e.g.,  \cite{DLP07} or \cite{SW98}. Notice that
the pair $(\mathcal{A}, \mu)$ can be canonically identified with a randomized algorithm.

We derive upper error bounds for both error criteria for randomized Smolyak algorithms with explicitly given dependence on the number of variables and the number of information evaluations used.
The former number is the underlying dimension of the problem, the latter number is typically proportional to the cost of the algorithm. The upper error bounds show that the randomized Smolyak method can be  efficiently used at least in moderately high dimension.
We complement this result by providing lower error bounds for randomized Smolyak
algorithms that nearly match our upper bounds.

As in the deterministic case, our upper and lower error bounds contain logarithmic factors whose powers depend linearly on the underlying dimension $d$, indicating that the direct use of the randomized Smolyak method in very high dimension may be prohibitive. Nevertheless, our upper error bounds shows that randomized Smolyak algorithms make perfect building blocks for more sophisticated algorithms such as multilevel algorithms (see, e.g., \cite{Hei98, HS99, Gil08a, GHM09, GiW09, HMNR10, NHMR11, Gne12, BG12, DG12, KSS15}), multivariate decomposition methods (see, e.g., \cite{KSWW10a, PW11, Was12, Gne13, DG12, DG13}) or dimension-wise quadrature methods (see \cite{GH10}).  We demonstrate this in the case of the infinite-dimensional integration problem on weighted tensor products of reproducing kernel Hilbert spaces with general kernels.
We provide the exact polynomial convergence rate of $N$-th minimal errors---the corresponding upper error bound is established by multivariate decomposition methods based on randomized Smolyak algorithms.

The paper is organized as follows: In Section \ref{SECT2} we provide the general multivariate problem formulation and illustrate it with two examples. In Section \ref{SECT3} we introduce the randomized multivariate Smolyak method building on randomized univariate algorithms. Our assumptions about the univariate randomized algorithms resemble the ones made
in \cite{WW95} in the deterministic case.
In Remark~\ref{algL2Rep} we observe that we may identify
our randomized linear approximation problem of interest with a corresponding deterministic $L^2$-approximation problem.

In Section \ref{SECT4} we follow the course of \cite{WW95} and establish first error bounds in terms of the underlying dimension of the problem and the level of the considered Smolyak algorithm,
see Theorem~\ref{BasicLemma} and Corollary~\ref{levelBoundGeneralConditions}.
For the randomized error criterion Remark~\ref{algL2Rep} helps us
to boil the error analysis of the randomized Smolyak method down to the error analysis of the deterministic Smolyak method provided in \cite{WW95}.
For the worst case error criterion Remark~\ref{algL2Rep} is of no help and therefore we
state the details of the analysis.

Up to this point we consider general randomized operators to approximate the solution we are seeking for. In Section \ref{SECT5} we focus on randomized algorithms and the information evalutions they use.
In Theorem~\ref{Theo_UB_PW} we present upper error bounds for the randomized Smolyak method where the dependence on the underlying dimension of the problem and on the number of information evalutions is revealed.
In Corollary \ref{Cor_Lower_Bound} we provide lower error bounds for the randomized Smolyak method.

In Section \ref{INF_DIM_INT} we apply our upper error bounds for randomized Smolyak algorithms to the infinite integration problem. After introducing the setting, we provide the exact polynomial convergence rate of $N$-th minimal errors in
Theorem~\ref{Theo_UB_PW}. In Corollary~\ref{Power}
we compare the power of randomized algorithms and deterministic algorithms for infinite-dimensional integration, and in Corollary~\ref{Korobov}
we illustrate the result of Theorem~\ref{Theo_UB_PW} for weighted Korobov spaces.
In Remark~\ref{Baustelle}
and Remark~\ref{Generalizations}
we discuss previous contributions to the considered infinite integration problem
and generalizations to other settings such as to function spaces with increasing smoothness or to the $L^2$-approximation problem.

In the appendix we provide for the convenience of the reader a self-contained proof of a folklore result on the convergence rates of randomized algorithms on Korobov spaces.

\section{Formulation of the Problem}\label{SECT2}

Let $d\in \N$. For $n=1,\ldots,d,$ let $F^{(n)}$ be a separable Hilbert space of real valued functions, $G^{(n)}$ be a separable Hilbert space, and $S^{(n)}:F^{(n)} \to  G^{(n)}$ be a continuous linear operator.
We consider now  the tensor product spaces $F_d$ and $G_d$ given by
$$F_d := F^{(1)} \otimes \cdots \otimes F^{(d)},$$
$$G_d := G^{(1)} \otimes \cdots \otimes G^{(d)},$$
and the tensor product operator $S_d$ (also called \emph{solution operator}) given by
$$ S_d := S^{(1)} \otimes \cdots \otimes S^{(d)}.$$
We frequently  use results concerning tensor products of Hilbert spaces and tensor product operators without giving explicit reference, for details on this subject see, e.g.,  \cite{Wei80}.
We denote the norms in $F^{(n)}$ and $F_d$ by $\|\cdot \|_{F^{(n)}}$ and $\|\cdot \|_{F_d}$ respectively, and the
norms in $G^{(n)}$ and $G_d$ simply by $\|\cdot\|$. Furthermore, $L(F_d, G_d)$ denotes the space of all bounded linear operators between $F_d$ and $G_d.$

 $S_d(f)$ may be approximated by randomized linear algorithms or, more generally, by randomized linear operators. We define a \emph{randomized  linear operator} $A$ to be a mapping
$$A: \Omega \rightarrow L(F_d, G_d)$$ such that $Af: \Omega \rightarrow G_d$ is a random variable for each $f \in F_d$; here $(\Omega, \Sigma, \Prob)$  is some probability space and $G_d$ is endowed with its Borel $\sigma-$field. We put
$$\mathcal{O}^{\ran} := \mathcal{O}^{\ran, \lin}(\Omega, F_d, G_d) := \{A:\Omega \rightarrow L(F_d, G_d) \, |  \, A \text{ is a randomized linear operator}\}. $$
Obviously one may interpret deterministic bounded linear operators as randomized linear operators with trivial dependance on $\Omega$.
Accordingly, we put
$$\mathcal{O}^{\deter} := \mathcal{O}^{\deter, \lin}(F_d, G_d) := L(F_d, G_d) \subset \mathcal{O}^{\ran, \lin}(\Omega, F_d, G_d),$$
where the inclusion is based on the identification of $A\in L(F_d, G_d)$ with the constant mapping $\Omega \ni \omega \mapsto A$.

A \emph{(randomized) linear approximation problem } is given by a quadruple $\{S_d, F_d, G_d, \mathcal{O}(\Omega) \},$ where $\mathcal{O}(\Omega) \subseteq \mathcal{O}^{\ran,\lin}(\Omega, F_d, G_d)$ denotes the class of admissible randomized linear operators.
 We are mainly interested in results for randomized linear algorithms, which constitute a subclass of $\mathcal{O}^{\ran}$ and will be introduced in Section \ref{SECT5}.

Consider a randomized linear operator $A$ meant to approximate $S_d$. The \emph{randomized error} of the operator is given by
 \begin{equation} \label{ran_err}
 e^{\rm{r}}(A) := e^{\rm{r}}(S_d,A):=  \sup\limits_{\substack{\lVert f \rVert_{F_d} \leq 1}}\Expec \bigg[ \lVert (S_d-A)f  \rVert^2\bigg]^{\frac{1}{2}},
 \end{equation}
and the\emph{ (root mean square) worst case error} is
\begin{equation} \label{wor_err}
e^{\rm{w}}(A) := e^{\rm{w}}(S_d,A):=  \Expec \bigg[ \sup\limits_{\substack{\lVert f \rVert_{F_d} \leq 1}}\lVert (S_d-A)f  \rVert^2\bigg]^{\frac{1}{2}}.
\end{equation}
Clearly we have $0\le e^{\rm{r}}(S_d,A) \le e^{\rm{w}}(S_d,A)$.

Notice that for a \emph{deterministic} linear Operator $A$ both errors coincide with the
\emph{deterministic worst case error}
$$e^{\rm d}(A) := e^{\rm d}(S_d, A) := \sup\limits_{\substack{\lVert f \rVert_{F_d} \leq 1}}\lVert (S_d-A)f  \rVert, $$
i.e., $e^{\rm d}(S_d, A) = e^{\rm r}(S_d, A) = e^{\rm w}(S_d, A)$.

We finish this section by giving two examples of typical tensor product problems that fit into the framework given above.

\begin{example}\label{ProblemExample}
For $n=1,\ldots, d$ let $D^{(n)}\neq \emptyset$ be an arbitrary domain and let $\rho^{(n)}$ be a positive measure on $D^{(n)}$. Denote by $E$ the Cartesian product $D^{(1)} \times \cdots \times D^{(d)}$ and by $\mu$ the product measure $\otimes_{n=1}^d \rho$ on $E$.
\begin{itemize}
\item[{\rm (}i{\rm)}] By choosing  $F^{(n)} \subset L^2(D^{(n)},\rho^{(n)})$, $G^{(n)} := \R$, and $S^{(n)}$ to be the integration functional $S^{(n)}(f) = \int_{D^{(n)}} f \,{\rm d}\rho^{(n)}$, we obtain $F_d \subset L^2(E, \mu)$, $G_d = \R$, and $S_d$ is the integration functional on $F_d$ given by
$$S_d(f) = \int_{E} f \,{\rm d}\mu, \hspace{3ex} f\in F_d.$$
The \emph{integration problem} is now to compute or approximate for given $f\in F_d$ the integral $S_d(f)$.
\item[{\rm (}ii{\rm)}] By choosing $F^{(n)} \subset G^{(n)} := L^2(D^{(n}),\rho^{(n)})$ and $S^{(n)}$ to be the embedding operator from $F^{(n)}$ into $G^{(n)}$, we obtain $F_d \subset G_d = L^2(E, \mu)$
and $S_d$ is the embedding operator from $F_d$ into $G_d$ given by
$$S_d(f) = f, \hspace{3ex} f\in F_d.$$
The \emph{$L^2$-approximation problem} is now to reconstruct  a given function $f\in F_d$, i.e., to compute or approximate $S_d(f)$; the reconstruction error is measured in the $L^2$-norm.
\end{itemize}
Note that in both problem formulations above the phrase ``a given function $f$'' does not necessarilly mean that the whole function is known. Usually there is only partial information about the function available (like a finite number of values of the function or of its derivatives or a finite number of Fourier coefficients) available. We discuss this point in more detail in
Section \ref{ALG5.1}.
\end{example}

\section{Smolyak Method for Tensor Product Problems}\label{SECT3}

From now on we are interested in randomizing the Smolyak method which is to be defined in this section. Assume that for every $n = 1,2,\ldots, d,$ we have a sequence of randomized linear operators $(U_l^{(n)})_{ l \in \mathbb{N}},$ which approximate the solution operator $S^{(n)}$
such that for every $f \in F^{(n)}$ it holds: $U_l^{(n)}f$ is a random variable on a probability space $(\Omega^{(n)}, \Sigma^{(n)}, \Prob^{(n)}).$ We shall refer to separate $U^{(n)}_l$ as to \emph{building blocks}.

Put $\Omega := \Omega^{(1)} \times \ldots \times \Omega^{(d)}, \Sigma := \bigotimes_{n = 1}^{d} \Sigma^{(n)}, \Prob := \bigotimes_{n = 1}^{d} \Prob^{(n)}$. We denote
\begin{align*}
 & \Delta_0^{(n)} := U_0^{(n)} := 0, \quad  \Delta_l^{(n)} := U_l^{(n)} - U_{l-1}^{(n)}, \quad  l \in \mathbb{N},
\end{align*}
and
\begin{align*}
 & Q(L,d) := \left\{\bl \in \mathbb{N}^d \, | \, |\bl| \leq L\right\}.
\end{align*}
Note that if $L \geq d$, then $|Q(L,d)| = \binom{L}{d}$.
For $f \in F_d $ the \emph{randomized Smolyak method of level L} approximating the tensor product problem $\{S_d,F_d,G_d, \mathcal{A}(\Omega)\}$ is given by

\begin{equation} \label{basic_form}
A(L,d)f:\Omega \rightarrow G_d, \quad \omega \mapsto \left(\sum\limits_{\bl \in Q(L,d) } \bigotimes_{n = 1}^{d}\Delta^{(n)}_{l_n}(\omega_n)\right)f.
\end{equation}
We would like to stress that due to the definition of the probability space $(\Omega, \Sigma, \Prob)$ for given $f_n \in F^{(n)}, n = 1,2, \ldots, d,$ the families $((U^{(n)}_lf_n)_{l \in \mathbb{N}}), n = 1,2, \ldots, d,$ are mutually independent.
 Note that for $L < d$ the Smolyak method is the zero operator. Therefore, we will always assume (without stating it explicitly every time) that $L \geq d.$

It can be verified  that the following representation holds
\begin{equation}\label{alg_bb_rep}
A(L,d) = \sum_{L-d+1 \leq |\bl| \leq L} (-1)^{L-|\bl|} \binom{d-1}{L-|\bl|} \bigotimes_{n = 1}^d U^{(n)}_{l_n},
\end{equation}
cf. \cite[Lemma 1]{WW95}.
When investigating the randomized error we  need that for every $l \in \mathbb{N}$ and $n=1,\ldots,d$
\begin{equation}\label{Ran1}
 U_l^{(n)}f \in L^{2}(\Omega^{(n)}, G^{(n)}) \quad \text{ for all } f \in F^{(n)}.
\end{equation}
In the worst case error analysis we require for every $l \in \mathbb{N}$ and $n=1,\ldots,d$
\begin{equation}\label{Wce1}
 \mu_{l,n}(\omega) := \sup\limits_{\substack{\lVert f \rVert_{F^{(n)}} \leq 1}} \lVert (U_l^{(n)}f)(\omega_n) \rVert < \infty \quad \text{ for all } \omega_n \in \Omega^{(n)}
\end{equation}
and that $\mu_{l,n}: \Omega  \rightarrow [0, \infty)$ is measurable with
\begin{equation}\label{Wce2}
 \lVert \mu_{l,n} \rVert_{L^{2}(\Omega^{(n)}, \mathbb{R})} < \infty.
\end{equation}

Let $\rm{x} \in \{\rm{r}, \rm{w}\}.$ When considering the error $e^{\rm{x}}(S_d,A(L,d)),$ we assume that there exist constants $B,C,E > 0$ and $D \in (0,1)$ such that for every $n = 1,2, \ldots, d,$ and every $l \in \mathbb{N}$

\begin{equation} \label{IntNorm}
\lVert S^{(n)} \rVert_{\rm{op}}  \leq B,
\end{equation}

\begin{equation} \label{ApproxNorm}
 e^{\rm{x}}(S^{(n)}, U_l^{(n)}) \leq CD^l,
\end{equation}
and additionally in the randomized setting
\begin{equation} \label{DiffNorm}
 \sup\limits_{\substack{ \lVert f \rVert_{F^{(n)}} \leq 1}}\Expec \bigg[\lVert \underbrace{ U_l^{(n)}f - U_{l - 1}^{(n)}f}_{= \Delta^{(n)}_lf} \rVert^2\bigg]^{\frac{1}{2}} \leq E D^l,
\end{equation}
and in the worst case setting





\begin{equation} \label{DiffNormWce}
\Expec \bigg[ \sup\limits_{\substack{ \lVert f \rVert_{F^{(n)}} \leq 1}}\lVert \underbrace{ U_l^{(n)}f - U_{l - 1}^{(n)}f}_{= \Delta^{(n)}_lf} \rVert^2\bigg]^{\frac{1}{2}} \leq E D^l.
\end{equation}
Note that (\ref{ApproxNorm}) implies the conditions (\ref{DiffNorm}) and (\ref{DiffNormWce}) with a constant $E := C(1+D^{-1})$ for all $l \geq 2.$
Still (\ref{DiffNorm}) and (\ref{DiffNormWce}) may even hold for some smaller $E.$

\begin{remark}\label{algL2Rep}
For our randomized error analysis it would be 
convenient to identify a randomized linear operator $A: \Omega \rightarrow L(F,G),$ $F,G$ separable Hilbert spaces, with the mapping (\ref{algo_form}) which we again denote by $A:$
\begin{equation}\label{algo_form}
A : F \to L^2(\Omega, G), \quad  f\mapsto \big( \omega \mapsto Af(\omega) \big).
\end{equation}
We now show that this identification makes sense for all the operators we are considering. We start with the building blocks $U^{(n)}_l.$
From (\ref{DiffNorm}) we obtain
\begin{equation}\label{Ran2}
 \sup\limits_{\substack{\lVert f \rVert_{F^{(n)}} \leq 1}}\Expec \left[ \lVert U_l^{(n)}f  \rVert^2
 \right]^{1/2} \leq \frac{ED}{1-D},
\end{equation}
 implying $(U^{(n)}_lf)(\cdot) \in L^2(\Omega^{(n)}, G^{(n)})$ for all $f \in F^{(n)}.$  The building blocks $U_l^{(n)}$ are obviously linear as mappings $F^{(n)} \rightarrow L^2(\Omega^{(n)}, G^{(n)})$ and, due to (\ref{Ran2}), also bounded, i.e. continuous.
 Now, since for arbitrary sample spaces $ \Omega_1, \Omega_2$  and separable Hilbert spaces $H_1, H_2$ it holds $$L^{2}(\Omega_1, H_1) \otimes L^{2}(\Omega_2, H_2) \cong L^{2}(\Omega_1 \times \Omega_2, H_1 \otimes H_2),$$ we have that  $(\bigotimes_{n = 1}^{d} U^{(n)}_{l_n})(f)(\cdot) $ lies in $L^{2}(\Omega,G_d)$ for $f \in F_d.$ Clearly, the tensor product operator $\bigotimes_{n = 1}^{d} U^{(n)}_{l_n}$ is a bounded linear mapping $F\to L^2(\Omega, G_d)$. Since due to \eqref{alg_bb_rep} the Smolyak method $A(L,d)$ may be represented as a finite sum of such tensor product operators, it is also a bounded linear map $F\to L^{2}(\Omega,G_d).$

If we formally consider $S_d$ as an operator $F_d\to L^2(\Omega, G_d)$, $f\mapsto \big( \omega \mapsto S_df \big)$ (i.e., an operator that maps into the constant $L^2$-functions),
then $S_d$ is still linear and continuous with operator norm
\begin{equation*}
\|S_d\|_{\op} =  \sup_{\|f\|_{F_d}\le 1} \| S_df\|_{L^2(\Omega, G)} = \sup_{\|f\|_{F_d}\le 1} \Expec \left[ \| S_df(\omega)\|^2 \right]^{1/2},
\end{equation*}
and the usual randomized error
 can be written as
\begin{equation}\label{error_op_norm}
e^{\rm{r}}(S_d, A)   = \sup_{\|f\|_{F_d}\le 1} \|(S_d-A)f\|_{L^2(\Omega, G_d)} = \|S_d- A\|_{\op}.
\end{equation}
The  worst case error
unfortunately does not allow for a representation as operator norm similar to  \eqref{error_op_norm}.

Note that the above identification turns a randomized approximation problem
$$S:F\rightarrow G, \quad A: \Omega \rightarrow L(F,G) $$
into a deterministic $L^2$-approximation problem
$$S: F \rightarrow L^2(\Omega, G), \quad A:F \rightarrow L^2(\Omega, G).$$
\end{remark}

\section{Error Analysis in Terms of the Level}\label{SECT4}
We now perform the error analysis of the approximation of $S_d$ by the Smolyak method $A(L,d)$ in terms of the level $L,$ which may be done under the rather general assumptions of Sections $2$ and $3.$

\begin{theorem}\label{BasicLemma}
For $L,d \in \mathbb{N}, L\geq d$ let $A(L,d)$ be a randomized Smolyak method as described in Section \ref{SECT3}. Let $\rm{x} \in \{\rm{w}, \rm{r}\}$. Assume  \eqref{IntNorm}, \eqref{ApproxNorm} and, dependently on the setting, for $\rm{x} = \rm{r}$ additionally assume (\ref{Ran1}),(\ref{DiffNorm}) and for $x = \rm{w}$ additionally assume (\ref{Wce1}), (\ref{Wce2}) and \eqref{DiffNormWce}. Then we have

\begin{equation}\label{levelBound}
e^{\rm{x}}(S_d,A(L,d)) \leq  CB^{d-1}D^{L-d+1} \sum\limits_{j = 0}^{d-1} \bigg( \frac{ED}{B} \bigg)^j \binom{L-d+j}{j} \leq CH^{d-1}\binom{L}{d-1}D^L,
\end{equation}

where $H = \max\{\frac{B}{D}, E\}.$
\end{theorem}

\begin{proof}
The second inequality in (\ref{levelBound}) follows easily by using $\sum_{j = 0}^{d-1} \binom{L-d+j}{j} = \binom{L}{d-1}$ and estimating $(\frac{ED}{B})^j \leq \max \{1, (\frac{ED}{B})^{d-1}\},$ so all there remains to be done is proving the first inequality.

Firstly we shall focus on the worst case error bound. Note that for a fixed $\omega \in \Omega$
$$\sup\limits_{\substack{\lVert f \rVert_{F_d} \leq 1}} \lVert (S_d - A(L,d)(\omega))f \rVert^2 = \left(\sup\limits_{\substack{\lVert f \rVert_{F_d} \leq 1}} \lVert (S_d - A(L,d)(\omega))f \rVert\right)^2 = \lVert S_d - A(L,d)(\omega) \rVert^2_{{\rm op}}$$
Now we may proceed similarly as in the proof of Lemma $2$ from \cite{WW95}, by induction on $d,L$ for $d \in \mathbb{N}$ and $L \in \{d,d+1,\ldots\}$ . For $d=1$ and any  $L \in \mathbb{N}_{\geq d}$ we have $S_d = S^{(1)}$ and $A(L,1) = U^{(1)}_L,$ so the statement is just the condition (\ref{ApproxNorm}). Suppose we have already proved the claim for $L,d$ and want to prove it for $L+1,d+1.$ Using $$A(L+1,d+1) = \sum_{\bl \in Q(L,d)} \bigotimes_{n = 1}^{d} \Delta^{(n)}_{l_n} \otimes U^{(d+1)}_{L+1-|\bl|}$$ and Minkowski's inequality we get
\begin{align*}
&\ e^{\rm{w}}(S_{d+1}, A(L+1, d+1)) = \Expec \bigg[\lVert  S_{d+1} - A(L+1, d+1)  \rVert^2_{{\rm op}}   \bigg]^{\frac{1}{2}}
\\
&= \Expec\bigg[\lVert \sum_{\bl \in Q(L,d)} (\bigotimes_{n = 1}^d \Delta_{l_n}^{(n)}) \otimes (S^{(d+1)} - U^{(d+1)}_{L+1-|\bl|}) + (S_d - A(L,d))\otimes S^{(d+1)} \rVert^2_{{\rm op}}  \bigg]^{\frac{1}{2}}
\\
&\leq  \Expec\bigg[\lVert \sum_{\bl \in Q(L,d)} (\bigotimes_{n = 1}^d \Delta_{l_n}^{(n)}) \otimes (S^{(d+1)} - U^{(d+1)}_{L+1-|\bl|}) \rVert^2_{{\rm op}}  \bigg]^{\frac{1}{2}} + \Expec\bigg[\lVert    (S_d - A(L,d))\otimes S^{(d+1)}      \rVert^2_{{\rm op}}  \bigg]^{\frac{1}{2}}.
\end{align*}

We use Minkowski's inequality, properties of tensor product operator norms, the fact that component algorithms $U^{(n)}_l, l \in \mathbb{N}$ are randomized independently for different $n \in \{1,\ldots, d\}$, (\ref{ApproxNorm}) and (\ref{DiffNormWce})  to obtain
\begin{align*}
\begin{split}
&  \Expec  \bigg[\lVert \sum_{\bl \in Q(L,d)} (\bigotimes_{n = 1}^d \Delta_{l_n}^{(n)}) \otimes (S^{(d+1)} - U^{(d+1)}_{L+1-|\bl|}) \rVert^2_{{\rm op}}  \bigg]^{\frac{1}{2}}
\\
&\leq \sum_{\bl \in Q(L,d)} \Expec \bigg[ \lVert \bigotimes_{n = 1}^{d} \Delta^{(n)}_{l_n}\rVert^{2}_{{\rm op}} \lVert S^{(d+1)} - U^{(d+1)}_{L+1-|\bl|} \rVert^{2}_{{\rm op}}   \bigg]^{\frac{1}{2}}
\\
&= \sum_{\bl \in Q(L,d)} \bigg(\prod_{n = 1}^d  \Expec \bigg[ \lVert \Delta_{l_n}^{(n)}\rVert_{{\rm op}}^2 \bigg]^{\frac{1}{2}} \bigg) \Expec\bigg[ \lVert S^{(d+1)} - U_{L+1-|\bl|}^{(d+1)} \rVert_{{\rm op}}^2\bigg]^{\frac{1}{2}}
\\
&\leq \sum_{\bl \in Q(L,d)} CE^dD^{L+1} = \binom{L}{d}CE^d D^{L + 1}.
\end{split}
\end{align*}
Furthermore, using (\ref{IntNorm}),
\begin{align*}
& \Expec\bigg[\lVert    (S_d - A(L,d))\otimes S^{(d+1)}      \rVert^2_{{\rm op}}  \bigg]^{\frac{1}{2}}
\\
& = \Expec \bigg[\lVert    (S_d - A(L,d))\rVert^2_{{\rm op}} \lVert S^{(d+1)}      \rVert^2_{{\rm op}}  \bigg]^{\frac{1}{2}}
\\
& =  \lVert S^{(d+1)}      \rVert_{{\rm op}} \Expec \bigg[\lVert    (S_d - A(L,d))\rVert^2_{{\rm op}} \bigg]^{\frac{1}{2}}
\\
& \leq Be^{{\rm w}}(S_d,A(L,d)).
\end{align*}
Therefore we have
\begin{align*}
&  e^{\rm{w}}(S_{d+1}, A(L+1,d+1))  \leq   \binom{L}{d} E^dCD^{L+1} + Be^{{\rm w}}(S_d,A(L,d))
\end{align*}
and using the induction hypothesis finishes the proof for the worst case error.

Now consider the randomized error.
By similar calculations as in the first part of the proof one could show that the claim holds true for the randomized error for elementary tensors. Then however, one encounters problems trying to lift it to the whole Hilbert space. The difficulty lies in the fact that the randomized error is not an operator norm of some tensor product operator, which would have enabled us to write it as a product of norms of the corresponding univariate operators and which has proved to be useful in bounding the worst case error. To get round it we need a different approach. The idea is to interpret a randomized problem as a deterministic $L^2-$approximation problem.
As already explained in the Remark \ref{algL2Rep} we may identify $(S_d - A(L,d)): \Omega \rightarrow L(F_d,G_d)$ with an operator
$F_d \rightarrow L^{2}(\Omega, G_d)$ again denoted by $(S_d - A(L,d)).$
Then however $e^{\rm{r}}(S_d, A)  = \|S_d- A\|_{\op}$ and we may proceed exactly as in Lemma 2 from \cite{WW95}, which finishes the proof.
\end{proof}

We may generalize the result of the Theorem \ref{BasicLemma} by allowing for more flexibility in convergence rates in (\ref{ApproxNorm}), (\ref{DiffNorm}) and (\ref{DiffNormWce}). It can be used to capture additional logarithmic factors in the error bounds for the building blocks algorithms. This turns out to be particularly useful when investigating the error  bounds for Smolyak methods whose building blocks are, e.g., multivariate  quadratures or approximation algorithms, as it is the case in \cite{DLP07}. Suppose namely there exists a constant $D \in (0,1)$ and non decreasing sequences of positive numbers $(C_l)_l, (E_l)_l, l \in \mathbb{N},$ such that for every $l \in \mathbb{N}$

\begin{equation} \label{ApproxNormSeq}
e^{\rm{x}}(S^{(n)}, U_l^{(n)}) \leq C_lD^l, \quad \rm{x} \in \{\rm{r}, \rm{w}\}.
\end{equation}
Moreover, in case of  the randomized error
\begin{equation} \label{DiffNormSeq}
 \sup\limits_{\substack{ \lVert f \rVert_{F^{(n)}} \leq 1}}\Expec \bigg[\lVert \underbrace{ U_l^{(n)}f - U_{l - 1}^{(n)}f}_{= \Delta^{(n)}_lf} \rVert^2\bigg]^{\frac{1}{2}} \leq E_l D^l ,
\end{equation}
and  in  case of the worst case error
\begin{equation} \label{DiffNormWceSeq}
\Expec \bigg[ \sup\limits_{\substack{ \lVert f \rVert_{F^{(n)}} \leq 1}}\lVert \underbrace{ U_l^{(n)}f - U_{l - 1}^{(n)}f}_{= \Delta^{(n)}_lf} \rVert^2\bigg]^{\frac{1}{2}} \leq E_l D^l.
\end{equation}

It is now easy to prove Corollary \ref{levelBoundGeneralConditions} along the lines of the proof of Theorem \ref{BasicLemma}.

\begin{corollary}\label{levelBoundGeneralConditions}
For $L,d \in \mathbb{N}, L\geq d$ let $A(L,d)$ be a randomized Smolyak method as described in Section \ref{SECT3}. Let $\rm{x} \in \{\rm{w}, \rm{r}\}$. Assume  \eqref{IntNorm}, \eqref{ApproxNormSeq} and, dependently on the setting, for $\rm{x} = \rm{r}$ assume (\ref{Ran1}),(\ref{DiffNormSeq}) and for $x = \rm{w}$ assume (\ref{Wce1}), (\ref{Wce2}) and \eqref{DiffNormWceSeq}. Then we have

\begin{equation}
e^{\rm{x}}(S_d,A(L,d)) \leq  C_LB^{d-1}D^{L-d+1} \sum\limits_{j = 0}^{d-1} \bigg( \frac{E_{L-1}D}{B} \bigg)^j \binom{L-d+j}{j} \leq C_LH_L^{d-1}\binom{L}{d-1}D^L,
\end{equation}

where $H_L = \max\{\frac{B}{D}, E_{L-1}\}.$
\end{corollary}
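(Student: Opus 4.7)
The plan is to induct on $d\geq 1$ with $L\geq d$, following the proof of Theorem~\ref{BasicLemma} verbatim and using the monotonicity of $(C_l)_l$ and $(E_l)_l$ to replace the constants $C$ and $E$ there by the sequence values $C_L$ and $E_{L-1}$. These two values dominate every $C_l$ and $E_l$ that can arise in the error analysis of $A(L,d)$: all building-block indices appearing in $A(L,d)$ lie in $\{1,\dots,L\}$, and the difference operators $\Delta_l^{(n)}$ that enter the Smolyak telescoping only invoke indices with $l\leq L-(d-1)\leq L-1$ whenever $d\geq 2$.

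For the base case $d=1$, one has $A(L,1)=U_L^{(1)}$, so \eqref{ApproxNormSeq} directly yields $e^{\mathrm{x}}(S_1,A(L,1))\leq C_L D^L$, which is precisely the right-hand side of the claim (the sum collapses to its single $j=0$ term). For the inductive step from $(L,d)$ to $(L+1,d+1)$ I use the same decomposition
$$
S_{d+1}-A(L+1,d+1)=\sum_{\bl\in Q(L,d)}\Bigl(\bigotimes_{n=1}^d\Delta_{l_n}^{(n)}\Bigr)\otimes\bigl(S^{(d+1)}-U_{L+1-|\bl|}^{(d+1)}\bigr)+\bigl(S_d-A(L,d)\bigr)\otimes S^{(d+1)}
$$
as in the proof of Theorem~\ref{BasicLemma} and apply Minkowski's inequality. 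In the worst-case setting, independence across $n$ and sub-multiplicativity of the operator norm under tensor products bound each summand of the first block by $\prod_{n=1}^d E_{l_n}D^{l_n}\cdot C_{L+1-|\bl|}D^{L+1-|\bl|}\leq C_{L+1}E_L^d D^{L+1}$, where I use $l_n\leq L$, $L+1-|\bl|\leq L+1$ and monotonicity. Summing over $|Q(L,d)|=\binom{L}{d}$ gives $\binom{L}{d}C_{L+1}E_L^d D^{L+1}$, which, rewritten as $C_{L+1}B^d D^{L-d+1}(E_L D/B)^d\binom{L}{d}$, is exactly the $j=d$ summand at level $(L+1,d+1)$. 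The second block yields $B\cdot e^{\mathrm{w}}(S_d,A(L,d))$; plugging in the induction hypothesis and using $C_L\leq C_{L+1}$ together with $E_{L-1}\leq E_L$ recovers the remaining $j=0,\dots,d-1$ summands, completing the step.

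For the randomized error, Remark~\ref{algL2Rep} reinterprets $S_d-A(L,d)$ as an operator $F_d\to L^2(\Omega,G_d)$ whose operator norm equals $e^{\mathrm{r}}(S_d,A(L,d))$, so the argument above applies verbatim. The second inequality in the displayed bound is then immediate from $(E_{L-1}D/B)^j\leq\max\{1,(E_{L-1}D/B)^{d-1}\}\leq (H_LD/B)^{d-1}$ (using $H_L\geq B/D$ and $H_L\geq E_{L-1}$) together with the identity $\sum_{j=0}^{d-1}\binom{L-d+j}{j}=\binom{L}{d-1}$. The only genuine obstacle is bookkeeping of the sequence indices; this is entirely routine because all relevant indices stay in $\{1,\dots,L\}$ and the non-decreasing property of $(C_l)_l$ and $(E_l)_l$ allows them to be uniformly replaced by $C_L$ and $E_{L-1}$ (or $C_{L+1}$ and $E_L$ at the inductive step).
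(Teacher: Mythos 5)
Your proposal is correct and follows exactly the route the paper intends: the paper itself only remarks that the corollary is proved ``along the lines of the proof of Theorem~\ref{BasicLemma}'', and your argument carries this out, with the right index bookkeeping ($l_n\leq L-(d-1)$ for the $\Delta$-indices and $L+1-|\bl|\leq L+1$ for the residual index) justifying the replacement of $C$ and $E$ by $C_L$ and $E_{L-1}$ via monotonicity. No gaps.
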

\begin{remark}
Note that applying Corollary \ref{levelBoundGeneralConditions} to the uni- or multivariate building block algorithms error bounds from \cite{DLP07} we may reproduce the error bounds obtained in this paper for the final (higher dimensional) Smolyak method.
\end{remark}

\section{Error Analysis in Terms of Information}\label{SECT5}

\subsection{Algorithms}\label{ALG5.1}
Consider a linear approximation problem given by $\{S,F,G, \mathcal{O}(\Omega)\}.$ The aim of this section is to specify those linear operators that we want to call algorithms and to explain the typical information-based complexity framework for investigating the error of an algorithm in terms of the cardinality of information, for further reference see, e.g., \cite{TWW88}. To this end we shall specify a class of  linear bounded functionals on $F$  called \emph{admissible information functionals} and denoted by $\Lambda$, which will become one more parameter of the approximation problem.
Given a constant $\tau \in \mathbb{N}_0$ and, if $\tau > 0$, a collection of $\lambda_i \in \Lambda, i = 1,\ldots, \tau,$ the \emph{information operator} $\mathcal{N}: F \rightarrow \mathbb{R}^{\max\{\tau,1\}}$ applied to $f \in F$ is determined via

\[ \mathcal{N}(f) =   \left\{
\begin{array}{ll}
      0 & \text{ if } \tau = 0, \\
      (\lambda_1(f),\ldots, \lambda_{\tau}(f)) & \text{else.} \\
\end{array}
\right. \]

Note that we are considering only non-adaptive information, meaning that the information functionals used do not depend on $f \in F.$

A deterministic linear operator $A \in \mathcal{O}^{\deter, \lin}(F,G)$
is called a \emph{deterministic linear algorithm} if it admits a representation
\begin{align*}
A = \phi \circ \mathcal{N},
\end{align*}
where $\mathcal{N}$ is an information operator and $\phi: \mathbb{R}^{\max\{{\tau},1\}} \rightarrow G$ is an arbitrary mapping. We denote the number of information functionals used by the deterministic algorithm $A$ for any input $f \in F$ by  $\cardet(A,F),$ i.e.,
$$\cardet(A,F) := \tau.$$
 We denote the class of deterministic linear algorithms with admissible information functionals $\Lambda$ by $\mathcal{A}^{\deter,\lin}(F,G, \Lambda).$

Let $(V_l)_{l \in \mathbb{N}}$ be an arbitrary sequence of algorithms and let $(\lambda_{l,i})_{i \in [m_l]}$ be the information functionals used by $V_l.$ We say that the sequence $(V_l)_l$  \emph{ uses nested information} if for every $a < b$
$$\{\lambda_{a,i} \,| \, i \in [m_a]  \} \subseteq  \{\lambda_{b,i} \,| \, i \in [m_b]  \}.$$

 A \emph{randomized linear algorithm} $A \in \mathcal{O}^{\ran, \lin}(\Omega, F,G)$ is a mapping
$$A:\Omega \rightarrow \mathcal{A}^{\deter,\lin}(F,G,\Lambda)$$
such that
$\omega \mapsto \cardet(A(\omega), F) \text{ is a random variable.}$

We denote the class of randomized linear algorithms with admissible information functionals $\Lambda$ by  $\mathcal{A}^{\ran,\lin}(\Omega,F,G,\Lambda) =: \mathcal{A}(\Omega, \Lambda).$

For a randomized linear algorithm $A$ we may finally define
$$\carran(A,F) := \Expec \left[ \cardet(A,F) \right].$$

We say that the information used by a sequence of randomized linear algorithms is \emph{nested} if it is nested for each $\omega \in \Omega.$ Note that the information used by $(A(L,d))_{L \geq d}$ is nested.

Now we would like to make some reasonable assumptions on the cost of building blocks of the Smolyak method.
Consider a randomized Smolyak method as described in Section \ref{SECT3} with building blocks being randomized algorithms. Let
$$m_{l,n} := \carran(U^{(n)}_{l}, F^{(n)}).$$
Notice that $m_{0,n} = 0.$
For $d \in \mathbb{N}, L = d,d+1,\ldots$ put
$$N := N(L,d) := \carran(A(L,d), F_d).$$
Let us assume that for every $n \in \{1,\ldots,d\}$ the sequence $(m_{l,n})_{l \in \mathbb{N}}$ is non-decreasing and that there exist constants $1\leq \kl  \leq \ku, 1< K$ such that for every $ n = 1,\ldots,d, l \in \mathbb{N}$ it holds
\begin{equation} \label{m_est_prot}
\kl  K^{l-1}(K - 1) \leq m_{l,n} - m_{l-1,n} \leq \ku K^{l-1}(K - 1).
\end{equation}
Note that this implies
\begin{equation}\label{m_est}
\kl (K^l - 1) \leq m_{l,n} \leq \ku(K^l - 1), \quad l \in \mathbb{N}.
\end{equation}

\begin{example}
Consider the integration problem from Example \ref{ProblemExample}. Let $s \in \mathbb{N}.$ For $n = 1, \ldots, d$ let $ D^{(n)} = [0,1]^s,$  $\rho^{(n)}$ be Lebesgue measure on $[0,1]^s$ and $F^{(n)}$ be some reproducing kernel Hilbert space of functions defined on $[0,1]^s$ (e.g., a Sobolev space with sufficiently high smoothness parameter).

Choose a prime number $b \geq s$ and for $n = 1, \ldots, d,$ and $l \in \mathbb{N}$ let $\mathcal{P}_l^{(n)}$ be a scrambled $(0,l,s)-$net in base $b$ as introduced in \cite{Owe95}. Now
$$U^{(n)}_l : F^{(n)} \rightarrow \mathbb{R}, \quad f \mapsto \frac{1}{b^l} \sum_{x \in \mathcal{P}_l^{(n)}} f(x)$$
is a randomized algorithm. Moreover, if we randomize $(U^{(n)}_l)_{n,l}$ in such a way that the families $(U^{(n)}_l)_l$ are independent then we may use them as building blocks of the Smolyak method and all the results of this paper apply, cf. also \cite{DLP07}.
\end{example}

\subsection{Upper Error Bounds}
Throughout the whole section we require that the assumptions of Theorem \ref{BasicLemma} hold.
Let us define $\alpha := \frac{\log(\frac{1}{D})}{\log(K)},$ where $K$ is as in (\ref{m_est_prot}) and $D$ is as in (\ref{ApproxNorm}).
We define the \emph{polynomial convergence rate of the algorithms} $U^{(n)}_l, l \in \mathbb{N}$ by
\begin{equation}\label{poly_rate_conv}
\mu^{(n)}_{\rm{x}} := \sup\{\delta \geq 0 \, | \, \sup_{l \in \mathbb{N}}  e^{\rm{x}}(S^{(n)}, U^{(n)}_l) m^{\delta}_{l,n} < \infty\}
\end{equation}
where $\rm{x} \in \{\rm{r}, \rm{w}\}.$ It is straightforward to verify that $\alpha \leq \mu^{(n)}_{\rm{x}}$ for every $n.$
 Indeed,we have
 \begin{equation}\label{mu_u_x}
e^{\rm{x}}(S^{(n)}, U^{(n)}_l) \leq \frac{C\kua}{m_{l,n}^{\alpha}},
\end{equation}
because of
\begin{equation}\label{OneDimCardBound}
 \frac{C \kua }{m_{l,n}^{\alpha}} \geq \frac{C \kua }{\kua (K^l - 1)^{\alpha}} \geq \frac{C}{K^{l \alpha}} = CD^l.
\end{equation}
Hence for each $n \in \{1, \ldots, d\}$ the quantity $\alpha$ is a lower bound on the polynomial order of convergence $\mu^{(n)}_{\rm{x}}$ of the algorithms $U^{(n)}_l, l \in \mathbb{N},$ and can be chosen arbitrarily close to $\mu^{(n)}_{\rm{x}}$ if the constants $C$ and $D$ in (\ref{ApproxNorm}) are chosen appropriately.

The aim of this section is to develop upper bounds on the error of $d-$variate Smolyak method in terms of $N,d$ and $\alpha.$ More concretely we prove the following theorem.
\begin{theorem}\label{CardInfUpBound}
Let $\rm{x} \in \{\rm{r}, \rm{w}\}.$ Let $\kl,\ku,K, \alpha$ be as above, and let the assumptions of Theorem \ref{BasicLemma} hold.
 Then there exist constants $C_0, C_1$  such that for all $d \in \mathbb{N}$ and all $L \geq d$ it holds
\begin{equation}\label{pw_req_est_dim1}
e^{\rm{x}}(A(L,1)) \leq C_0C_1 N^{-\alpha}
\end{equation}
and
\begin{equation}\label{pw_req_est}
e^{\rm{x}}(A(L,d)) \leq  C_0 C_1^d  \left(1 + \frac{ \log(N)}{d-1}\right)^{(d-1)(\alpha + 1)}N^{-\alpha}, \,\, d\geq 2,
\end{equation}
where $N = N(L,d)$ is the cardinality of information used by the algorithm $A(L,d).$
\end{theorem}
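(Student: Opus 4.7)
The plan is to combine the level-based bound from Theorem~\ref{BasicLemma} with a two-sided estimate of the cardinality $N(L,d)$ as a function of the level $L$ and dimension $d$. The case $d=1$ reduces immediately: $A(L,1) = U^{(1)}_L$, so $N = m_{L,1}$, and the estimate (\ref{mu_u_x}) directly yields (\ref{pw_req_est_dim1}) with appropriate $C_0, C_1$.

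For the multivariate case $d\ge 2$, I would first establish an \emph{upper} cardinality bound. Starting from the representation (\ref{alg_bb_rep}), each tensor summand $\bigotimes_{n=1}^d U^{(n)}_{l_n}$ uses at most $\prod_n m_{l_n,n}$ information evaluations; substituting the upper bound $m_{l,n}\le \ku K^l$ from (\ref{m_est}), grouping the sum over $\{\bl : |\bl|\le L\}$ by level $k=|\bl|$ (so $|\{\bl\in\N^d : |\bl|=k\}|=\binom{k-1}{d-1}$), and summing the resulting geometric-type series yields an estimate of the form
$$N(L,d)\;\le\; c_1^{\,d}\,\binom{L-1}{d-1}\,K^L.$$
For a complementary \emph{lower} bound I would retain a single summand of (\ref{alg_bb_rep}) (for instance $l_1=L-d+1$, $l_n=1$ for $n\ge 2$) and apply the lower bound $m_{l,n}\ge \kl(K^l-1)$; taking logarithms in the resulting inequality delivers $L\le c_3(d+\log N)$, and hence $L/(d-1)\le c_4\bigl(1+\log(N)/(d-1)\bigr)$ for $d\ge 2$.

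The remainder is algebra. Starting from Theorem~\ref{BasicLemma} in the form $e^{\rm x}(A(L,d))\le CH^{d-1}\binom{L}{d-1}K^{-\alpha L}$ and using the upper cardinality bound to rewrite $K^{-\alpha L}\le (c_1^{\,d}\binom{L-1}{d-1})^\alpha N^{-\alpha}$, I obtain
$$e^{\rm x}(A(L,d))\;\le\;C\,H^{d-1}\,c_1^{\,\alpha d}\,\binom{L}{d-1}^{\alpha+1}\,N^{-\alpha}.$$
To conclude I would apply the sharp binomial estimate $\binom{L}{d-1}\le (eL/(d-1))^{d-1}$ and substitute the inequality for $L/(d-1)$ from the previous paragraph, which produces exactly the factor $(1+\log(N)/(d-1))^{(d-1)(\alpha+1)}$ in (\ref{pw_req_est}).

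The main obstacle is obtaining the logarithmic factor in the form $(1+\log(N)/(d-1))^{(d-1)(\alpha+1)}$ rather than simply $\log(N)^{(d-1)(\alpha+1)}$: the $(d-1)$ in the denominator is what keeps the bound from deteriorating once $d$ becomes comparable to $\log N$, and recovering it forces the use of the Stirling-type estimate $\binom{L}{d-1}\le (eL/(d-1))^{d-1}$ together with an explicit tracking of the $d$-dependence in the lower bound for $N(L,d)$. The bookkeeping that folds $B,C,D,E,H,K,\kl,\ku$ and the constants $c_1, c_3, c_4$ into the final $C_0, C_1$ is then routine.
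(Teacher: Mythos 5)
Your proposal is correct, and it uses the same ingredients as the paper's proof---the level bound of Theorem~\ref{BasicLemma}, two-sided cardinality estimates as in Lemma~\ref{NBound}, and a Stirling-type bound on $\binom{L}{d-1}$---but it assembles them in a genuinely different and cleaner way at the final step. The paper first proves the error bound with $N$ replaced by the upper cardinality bound $N_u$, then transfers it to the lower bound $N_l$ via the explicit ratio $N_u/N_l=(K/(K-1))^{d+1}(\ku/\kl)^d$ (inequality \eqref{ulInequality}), and finally invokes the unimodality of $x\mapsto (1+\log(x)/(d-1))^{(d-1)(\alpha+1)}x^{-\alpha}$ together with $N_l\le N\le N_u$ to conclude that this function evaluated at $N$ dominates its minimum over $\{N_l,N_u\}$. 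You instead note that $N$ enters the target bound in two places with opposite monotonicity and feed each occurrence the appropriate one-sided estimate: the upper bound $N\le N_u$ handles the factor $N^{-\alpha}$ directly, while a lower bound on $N$ (a single summand of \eqref{alg_bb_rep} suffices, even though it is weaker than the paper's $N_l^{\rm nest}$ by a factor $\binom{L-1}{d-1}$) yields $L\le c_3(d+\log N)$ and hence controls the logarithmic factor via $\binom{L}{d-1}\le (eL/(d-1))^{d-1}$. This bypasses both the unimodality discussion and the transfer inequality \eqref{ulInequality} at no cost, since the extra constants are still absorbed into $C_0C_1^d$. The only point worth making explicit in a write-up is that $N(L,d)$ dominates the expected cardinality of each individual summand of \eqref{alg_bb_rep} because the information used by $A(L,d)$ contains the information used by every summand; this is consistent with the counting conventions of Lemma~\ref{NBound}.
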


To prove Theorem \ref{CardInfUpBound} we need a lemma bounding $N(L,d)$ in terms of $\kl,\ku,K, d$ and $L.$

\begin{lemma}\label{NBound}
Let $\kl,\ku,K$ be as above. Put
$$N_l^{\rm{nest}}:= N_l^{\rm{nest}}(L,d) = \kld \bigg(\frac{K - 1}{K}\bigg)^{d}  K^L \binom{L-1}{d-1}, $$
$$N_u :=N_u(L,d) =  \kud \frac{K}{K - 1} K^L \binom{L-1}{d-1},$$
$$N_u^{\rm{nest}} := N_u^{\rm{nest}}(L,d) := \kud \bigg(\frac{K - 1}{K}\bigg)^{d}  K^L \binom{L-1}{d-1}.$$
For every $d \in \mathbb{N}$ and $L \geq d$ it holds
$$N_l^{\rm{nest}}(L,d) \leq N(L,d) \leq N_u(L,d).$$
Moreover, if the building blocks of the Smolyak method use nested information then
$$ N_l^{\rm{nest}}(L,d) \leq N(L,d) \leq N_u^{\rm{nest}}(L,d).$$
\end{lemma}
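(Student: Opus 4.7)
The plan is to reduce the computation of $N(L,d)$ to explicit sums over multi-indices $\bl$ and then apply the geometric bounds (\ref{m_est_prot})--(\ref{m_est}) termwise. I would first unfold $A(L,d)$ via the combination identity \eqref{alg_bb_rep}: in the general case the tensor products $\bigotimes_n U^{(n)}_{l_n}$ that appear with non-zero coefficient are evaluated on mutually independent random information, so their expected cardinalities add and give
\begin{equation*}
N(L,d) \,=\, \sum_{L-d+1 \leq |\bl| \leq L} \prod_{n=1}^d m_{l_n, n}.
\end{equation*}
In the nested case I would instead use the equivalent $\Delta$-representation $A(L,d)=\sum_{|\bl|\leq L}\bigotimes_n \Delta^{(n)}_{l_n}$ and exploit that, under nesting, every point of the Smolyak grid is uniquely characterised by its \emph{birth level} $(l_1,\ldots,l_d)\in\N^d$ in each coordinate. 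This yields the disjoint decomposition
\begin{equation*}
N(L,d) \,=\, \sum_{|\bl|\leq L} \prod_{n=1}^d \bigl(m_{l_n,n}-m_{l_n-1,n}\bigr).
\end{equation*}

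With the elementary composition count $|\{\bl\in\N^d:|\bl|=k\}|=\binom{k-1}{d-1}$ in hand, both representations reduce after the termwise geometric estimates to sums of the form $\sum_k\binom{k-1}{d-1}K^k$. For the common lower bound $N_l^{\rm nest}$ I would use $m_{l,n}\geq m_{l,n}-m_{l-1,n}\geq \kl K^{l-1}(K-1)$, which is valid in either setting since $m_{l-1,n}\geq 0$, to obtain $\prod_n m_{l_n,n}\geq \kld(K-1)^d K^{|\bl|-d}$. Keeping only the top layer $|\bl|=L$---which contains exactly $\binom{L-1}{d-1}$ multi-indices---then produces the claimed $\kld\left(\frac{K-1}{K}\right)^d K^L\binom{L-1}{d-1}$.

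For the upper bounds I would apply $m_{l,n}\leq \ku K^l$ in the general case and $m_{l_n,n}-m_{l_n-1,n}\leq \ku K^{l_n-1}(K-1)$ in the nested case, reducing both expressions to $\kud (K-1)^d K^{-d}\sum_{k=k_0}^L\binom{k-1}{d-1}K^k$ with $k_0=L-d+1$ or $k_0=d$ respectively. The monotonicity bound $\binom{k-1}{d-1}\leq\binom{L-1}{d-1}$ together with the geometric estimate $\sum_{k=k_0}^L K^k\leq K^{L+1}/(K-1)$ then delivers the stated $N_u$ and $N_u^{\rm nest}$. The genuine difficulty here is Step 1 for the nested case: one has to verify that the birth-level argument produces an honest disjoint decomposition of the sparse grid, so that $N(L,d)$ is an actual equality rather than a mere upper bound. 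Once that structural identity is in place, the remainder is routine bookkeeping with binomial coefficients and geometric series.
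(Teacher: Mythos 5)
Your starting identity for the general case is the main problem: the asserted equality $N(L,d)=\sum_{L-d+1\leq|\bl|\leq L}\prod_{n}m_{l_n,n}$ does not hold. Coordinate-wise independence gives $\Expec\left[\cardet\left(\bigotimes_{n}U^{(n)}_{l_n}\right)\right]=\prod_{n}m_{l_n,n}$ for each \emph{fixed} $\bl$, but across different multi-indices the factors in a common coordinate are built from the \emph{same} random source $\omega_n$, and their information sets may overlap --- they overlap maximally precisely in the nested case. Hence only subadditivity $N(L,d)\leq\sum_{\bl}\prod_n m_{l_n,n}$ is available, which is what the paper writes and which suffices for $N_u$. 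Your lower bound, however, is obtained by ``keeping only the top layer'' of this sum, and you cannot discard terms from a quantity that is merely an upper bound for $N(L,d)$. The paper closes this hole by first observing that, for given cardinalities $m_{l,n}$, the total information is minimized when the building blocks use nested information, and then counting in the nested situation: for each $\bl$ with $|\bl|=L$ the product functionals with birth level exactly $\bl$ are used by $\bigotimes_n U^{(n)}_{l_n}$ and by no other top-layer tensor product, these cells are pairwise disjoint, and each has cardinality $\prod_n(m_{l_n,n}-m_{l_n-1,n})\geq\kld(K-1)^dK^{L-d}$. Your birth-level decomposition is exactly the right tool here (indeed it is a sharper, exact version of the paper's disjointness argument), but you still need the reduction to the nested case --- or some substitute --- before the lower bound applies to non-nested building blocks.

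Second, your arithmetic for the nested upper bound does not close. From $\sum_{|\bl|\leq L}\prod_n(m_{l_n,n}-m_{l_n-1,n})\leq\kud(K-1)^dK^{-d}\sum_{k=d}^{L}\binom{k-1}{d-1}K^k$, your two estimates $\binom{k-1}{d-1}\leq\binom{L-1}{d-1}$ and $\sum_{k}K^k\leq K^{L+1}/(K-1)$ yield $\kud(K-1)^{d-1}K^{L+1-d}\binom{L-1}{d-1}$, which equals $\tfrac{K}{K-1}\,N_u^{\rm nest}$ and is strictly larger than the claimed bound. This is not a slip you can repair by sharper bookkeeping: already for $d=1$ one has $N(L,1)=m_{L,1}$, which may equal $\ku(K^L-1)$ under \eqref{m_est_prot}, whereas $N_u^{\rm nest}(L,1)=\ku(K-1)K^{L-1}$, and the former exceeds the latter for all $L\geq 2$. (The paper's own one-line justification of the nested upper bound, which accounts only for the top layer $|\bl|=L$, suffers from the same defect.) So you should either report the weaker constant your computation actually produces or flag the stated bound; what you cannot do is assert that the routine estimates ``deliver the stated $N_u^{\rm nest}$''. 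Your general-case upper bound chain, read with the correct prefactor $\kud$ (no $(K-1)^dK^{-d}$ there, since you use $m_{l,n}\leq\ku K^l$), does give $N_u$ and matches the paper.
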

\begin{proof}
We have
\begin{align*}
 N(L,d)& = \Expec \left[\cardet\left( \sum_{L-d+1 \leq |\bl|\leq L} (-1)^{L - |\bl|} \binom{d-1}{L - |\bl|} \bigotimes_{n = 1}^d U^{(n)}_{l_n}(\omega)\right)   \right] \\
& \leq  \sum_{L-d+1 \leq |\bl|\leq L} \Expec \left[ \cardet\left(\bigotimes_{n = 1}^d U^{(n)}_{l_n}(\omega) \right)  \right]\\
& =  \sum_{L-d+1 \leq |\bl|\leq L} \prod_{n = 1}^{d} \Expec \left[ \cardet\left(U^{(n)}_{l_n}(\omega)\right)  \right] \\
& = \sum_{L-d+1 \leq |\bl|\leq L} \prod_{n = 1}^{d} m_{l_n,n} \leq \kud \sum_{L-d+1 \leq |\bl|\leq L} K^{|\bl|}.
\end{align*}
Now following the steps of \cite[Lemma 7]{WW95} we obtain
\begin{align*}
N(L,d) &  \leq \kud \sum_{|\bl| = L - d +1}^{L} K^{|\bl|} \leq \kud \sum_{\nu = L - d +1}^{L} K^{\nu} \binom{\nu - 1}{d-1}\\
& \leq \kud \binom{L-1}{d-1} (K^{L+1} - K^{ L-d+1 })(K-1)^{-1}\\
& \leq \kud \frac{K}{K - 1} K^{L} \binom{L-1}{d-1} = N_u.
\end{align*}
Now we provide a lower bound on $N(L,d).$
Note that given the cardinality of information used by the building blocks, the cardinality of information used by the Smolyak method is minimal when the information used by the building blocks is nested for every coordinate. In this case the information used by the Smolyak method is exactly the information used by $\sum_{|\bl| = L} \bigotimes_{n = 1}^{d} U^{(n)}_{l_n}.$ Let us fix $\bl \in \mathbb{N}^d, |\bl| = L.$ The expected value of the cardinality of information used by $\bigotimes_{n = 1}^{d}U^{(n)}_{l_n}$ and at the same time not used by any other $\bigotimes_{n = 1}^{d}U^{(n)}_{ v_n}$ with $|{\bf v}| = L$ is
\begin{equation}
\prod_{n = 1}^{d} (m_{l_n,n} - m_{l_n - 1, n}) \geq \kld K^{L-d}(K-1)^d.
\end{equation}
We obtain
\begin{align*}
N(L,d) & \geq  \sum_{|\bl| = L}  \kld K^{L-d}(K-1)^d \\
& = \kld \left( \frac{K - 1}{K}  \right)^d K^L \binom{L-1}{d-1} =N_l^{\rm{nest}}.
\end{align*}
The upper bound in the case when the building blocks use nested information follows in exactly the same manner on noting that
\begin{equation}
\kud K^{L-d}(K-1)^d \geq  \prod_{n = 1}^{d} (m_{l_n,n} - m_{l_n - 1, n}).
\end{equation}
\end{proof}

\begin{proof}{(Theorem \ref{CardInfUpBound})}

Note that $N(L,1) = m_{L,1}$ so we have already showed the statement for $d = 1$ in (\ref{OneDimCardBound}). It remains to consider the case $d > 1.$
Consider the function
$$f: [1, \infty) \rightarrow \mathbb{R}, \quad  x \mapsto \left(1 + \frac{\log(x)}{d-1}\right)^{(d-1)(\alpha + 1)}x^{-\alpha}.$$
We will show that there exist constants $\widetilde{C}_{u,0}, \widetilde{C}_{u,1}, \widetilde{C}_{l,0}, \widetilde{C}_{l,1}$ such that for $N_u,N_l$ from Lemma \ref{NBound} it holds
\begin{equation}\label{NuBound}
e^{\rm{x}}(A(L,d))  \leq \widetilde{C}_{u,0}\widetilde{C}_{u,1}^d f(N_u)
\end{equation}
and
\begin{equation}\label{NlBound}
e^{\rm{x}}(A(L,d))  \leq \widetilde{C}_{l,0}\widetilde{C}_{l,1}^d f(N_l).
\end{equation}
Now unimodality of $f$ combined with the fact that the extremum is a maximum yields $f(N(L,d)) \geq \min \{f(N_u), f(N_l) \}$ finishing the proof.

First we prove (\ref{NuBound}).
Calling upon Theorem \ref{BasicLemma} and using $L \leq \frac{\log(N_u)}{ \log(K)}$ we get

\begin{align*}
e^{\rm{x}}(S_d,A(L,d)) & \leq CH^{d-1} \binom{L}{d-1}D^L
\\
& =CH^{d-1} \binom{L}{d-1}K^{-L\alpha}
\\
& = CH^{d-1}\binom{L}{d-1}\bigg(\kud \frac{K}{K - 1} \bigg)^{\alpha} \binom{L-1}{d-1}^{\alpha}N_u^{-\alpha}
\\
& \leq \frac{C}{H}(H\kua)^d \left(\frac{K}{K - 1}\right)^{\alpha}\binom{L}{d-1}^{\alpha+1}N_u^{- \alpha}
\\
& \leq \frac{C}{H}(H\kua)^d \left(\frac{K}{K - 1}\right)^{\alpha} \frac{\log(N_u)^{(d-1)(\alpha+1)}}{( \log(K))^{(d-1)(\alpha+1)} ((d-1)!)^{\alpha+1}} N_u^{- \alpha}
\\
& = \frac{C}{((d-1)!)^{\alpha+1}H}\left(\frac{K \log(K)}{K - 1}\right)^{\alpha}\log(K) \left( \frac{H \kua}{(    \log(K))^{\alpha+1}} \right)^d \frac{\log(N_u)^{(d-1)(\alpha+1)}}{N_u^{\alpha}}
\\
& = ((d-1)!)^{-\alpha - 1} C_{u,0} C_{u,1}^d \frac{\log(N_u)^{(d-1)(\alpha+1)}}{N_u^{\alpha}},
\end{align*}
with constants $C_{u,0}, C_{u,1}$ not depending neither on $d$ nor on $N(L,d).$ By Stirling's formula we conclude
\begin{align*}
e^{\rm{x}}(S_d,A(L,d)) & \leq \left(\frac{e^d}{(2\pi)^{\frac{1}{2}}(d-1)^{\frac{1}{2}}  (d-1)^{(d-1)}  }\right)^{\alpha + 1}  C_{u,0} C_{u,1}^d \frac{\log(N_u)^{(d-1)(\alpha+1)}}{N_u^{\alpha}} \\
& \leq \widetilde{C}_{u,0} \widetilde{C}_{u,1}^d   \left(\frac{\log(N_u)}{d-1}\right)^{(d-1)(\alpha + 1)}N_u^{-\alpha}.
\end{align*}

Now we prove (\ref{NlBound}).
To this end it suffices to prove that there exist constants $\hat{C}_{0}, \hat{C}_1$ independent of $d$ and $N$ such that
\begin{equation} \label{ulInequality}
 \left(\frac{\log(N_u)}{d-1}\right)^{(d-1)(\alpha + 1)}N_u^{-\alpha} \leq \hat{C}_0 \hat{C}_1^d  \left( 1 + \frac{\log(N_l)}{d-1}\right)^{(d-1)(\alpha + 1)}N_l^{-\alpha}, \end{equation}
i.e.,
$$\left(\frac{N_l}{N_u}\right)^{\alpha} \left(\frac{\log(N_u)}{(d-1) + \log(N_l)}   \right)^{(d-1)(\alpha + 1)} \leq \hat{C}_0 \hat{C}_1^d .$$
Note that
$$\frac{N_u}{N_l} = \left(\frac{K}{K-1} \right)^{d+1} \left(\frac{\ku}{\kl}\right)^d$$
so, putting $\hat{K} = \frac{K}{K-1}\frac{\ku}{\kl}$ we have
\begin{align*}
\frac{\log(N_u)}{(d-1) + \log(N_l)} \leq  & \frac{\log \left( \frac{K}{K-1} \right) + d\log(\hat{K}) + \log(N_l)}{(d-1) + \log(N_l)}
\\
& \leq \log\left( \frac{K}{K-1}\right) + \frac{d}{d-1}\log(\hat{K}) + 1 \leq \log\left( \frac{K}{K-1}\right) + 2\log(\hat{K}) + 1 .
\end{align*}
Since obviously $\left(\frac{N_l}{N_u}\right)^{\alpha} \leq 1$ this shows (\ref{ulInequality}) and finishes the proof of the theorem.
\end{proof}

\subsection{Lower Error Bounds}
In this subsection we make the following additional assumptions.

The first assumption states that there exist a sequence of instances of the problem $\{S_d,F_d,G_d, \mathcal{A}(\Omega, \Lambda)\}$ that is genuinely univariate, i.e.,
there exists a sequence $(f_l)_{l \in \mathbb{N}} \in F_d, \, f_l= g_{1,l} \otimes g_{2,l} \otimes \cdots \otimes g_{d,l}$ such that $\lVert f_l  \rVert_{F_d} = 1$ for which
\begin{equation}\label{v1}
\lVert(S^{(2)} \otimes \cdots \otimes S^{(d)})(g_{2,l} \otimes \cdots \otimes g_{d,l})\rVert =: \theta_d > 0,
\end{equation}
and the $U_l^{(n)}, l\geq 1$ are exact on $g_{n,l}$ for $n>1$ .

Secondly, we assume that there exist constants $\widetilde{C}>0, \widetilde{D}\in(0,1)$ such that  for every $l \in \mathbb{N}$
\begin{equation}\label{v2}
\Expec\bigg[ \lVert S^{(1)}g_{1,l} - U_l^{(1)}g_{1,l} \rVert^2\bigg]^{\frac{1}{2}} \geq \widetilde{C} \widetilde{D}^l.
\end{equation}
Let us put
$$\beta := \frac{\log(\frac{1}{\widetilde{D}})}{\log(K)},$$
with $\widetilde{D}$ as in (\ref{v2}) and $K$ as in (\ref{m_est_prot}).
Using (\ref{v2}) and (\ref{m_est})
one easily sees that
\begin{equation}\label{UnivLowBoundR}
\Expec \left[\lVert (S^{(1)} - U^{(1)}_l)g_{1,l} \rVert^{2} \right]^{\frac{1}{2}} \geq \widetilde{C} \left( \kl(1-K^{-1})   \right)^{\beta} m_{l,1}^{-\beta},
\end{equation}
meaning that we have $\beta \geq \mu^{(1)}_{{\rm{x}}},$ where $ \mu^{(1)}_{{\rm{x}}}$ is as in (\ref{poly_rate_conv}). Moreover, by choosing $(g_{1,l})_{l \in \mathbb{N}}$ appropriately, $\beta$ can be made arbitrarily close to $\mu^{(1)}_{{\rm{x}}}.$
\begin{example}
The assumptions made in this subsection are quite naturally met for many important problems. Consider for instance an integration problem as described in Example \ref{ProblemExample}, where $F^{(n)}, n = 2, \ldots, d$ may be any spaces containing constant functions. Then, for an appropriate  $(g_{1,l})_{l \in \mathbb{N}}$ (chosen so that the integration error does not converge too fast to $0$) we have that
$$f_l := g_{1,l} \otimes \mathbbm{1}_{D} \otimes \cdots \mathbbm{1}_{D}$$
satisfies our assumptions for any randomized quadrature with weights adding up to $1.$
 \end{example}

\begin{lemma}\label{LowBoundLem}
Let ${\rm{x}} \in \{\rm{w}, \rm{r}\},$ and let (\ref{v1}) and (\ref{v2}) hold.
Then there exists a constant $\hat{c}_d$ such that
$$e^{{\rm{x}}}(A(L,d)) \geq \hat{c}_d m_{L,1}^{-\beta}$$
for all $L \geq d.$

If additionally (\ref{v1}) and (\ref{v2}) are satisfied for all $d \in \mathbb{N}$ with the same constants $\widetilde{C}$ and $\widetilde{D}$ and  $\Theta := \sup_{L,d} \left( \frac{m_{L,1}}{m_{L-d+1,1}}   \right)^{2\beta} \theta^2_d $ is bounded, then we may choose the constants $(\hat{c}_d)_{d \in \mathbb{N}}$ in such a way that they are all equal.
\end{lemma}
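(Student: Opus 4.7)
The plan is to exhibit an explicit rank-one ``hard'' input and push the analysis through. For any level $L \geq d$, I would set $k := L-d+1$ and test $A(L,d)$ against $f_k = g_{1,k} \otimes \cdots \otimes g_{d,k}$ from \eqref{v1}; by hypothesis $\|f_k\|_{F_d}=1$. The key observation is that exactness of $U^{(n)}_l$ on $g_{n,k}$ for $n\geq 2$ forces $\Delta^{(n)}_l g_{n,k}=0$ for $l\geq 2$ and $\Delta^{(n)}_1 g_{n,k}=S^{(n)} g_{n,k}$. Plugging $f_k$ into the Smolyak expansion $A(L,d)=\sum_{\bl \in Q(L,d)}\bigotimes_n \Delta^{(n)}_{l_n}$ therefore kills every summand with some $l_n\geq 2$ for $n \geq 2$, and the remaining $l_1$-sum telescopes to $U^{(1)}_{L-d+1}$. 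Consequently,
\begin{equation*}
(S_d - A(L,d))f_k \;=\; \bigl(S^{(1)} - U^{(1)}_{L-d+1}\bigr) g_{1,k} \,\otimes\, \bigotimes_{n=2}^{d} S^{(n)} g_{n,k}.
\end{equation*}

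Since the $U^{(n)}$ are randomized independently over $n$ and the $G_d$-norm factorises on elementary tensors, I would take expectations and invoke \eqref{v1} and \eqref{v2}, together with the elementary inequality $e^{\rm w}\geq e^{\rm r}$, to obtain
\begin{equation*}
e^{\rm x}(A(L,d)) \;\geq\; \E\bigl[\,\|(S^{(1)} - U^{(1)}_{L-d+1}) g_{1,k}\|^{2}\bigr]^{1/2}\, \theta_d \;\geq\; \widetilde{C}\,\widetilde{D}^{\,L-d+1}\,\theta_d.
\end{equation*}
The estimate \eqref{UnivLowBoundR} turns $\widetilde{D}^{L-d+1}$ into a constant times $m_{L-d+1,1}^{-\beta}$, and the trivial identity $m_{L-d+1,1}^{-\beta}=(m_{L,1}/m_{L-d+1,1})^{\beta}\, m_{L,1}^{-\beta}$ rewrites this as
\begin{equation*}
e^{\rm x}(A(L,d)) \;\geq\; \widetilde{C}\,\bigl(\kl(1-K^{-1})\bigr)^{\beta}\, \theta_d\, \Bigl(\tfrac{m_{L,1}}{m_{L-d+1,1}}\Bigr)^{\beta}\, m_{L,1}^{-\beta}.
\end{equation*}
For each fixed $d$, the two-sided cost estimate \eqref{m_est} guarantees that $m_{L,1}/m_{L-d+1,1}$ stays bounded away from $0$ uniformly in $L \geq d$ (in fact comparable to $K^{d-1}$), so I would define $\hat{c}_d$ as the infimum over $L \geq d$ of the prefactor in front of $m_{L,1}^{-\beta}$. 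This yields $\hat{c}_d>0$ and settles the first assertion.

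The main obstacle is the $d$-uniform statement. Since $\widetilde{C}, \widetilde{D}$ are now available with the same values for every $d$, the exponent $\beta$ is $d$-independent, and what remains is to verify that the prefactor $\theta_d\,(m_{L,1}/m_{L-d+1,1})^{\beta}$ can be controlled uniformly in both $L$ and $d$. This is precisely the role of the $\Theta$-condition: it couples the decay of $\theta_d$ with the growth of the cost ratio so that the prefactor stays comparable to a single positive constant, allowing one to replace the sequence $(\hat{c}_d)_{d\in\mathbb{N}}$ by one dimension-free $\hat{c}$. The technical subtlety is the index shift from $L$ to $L-d+1$, which is forced by concentrating the difficulty of the input on the first coordinate; this shift is exactly what the $\Theta$-condition compensates for.
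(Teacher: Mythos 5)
Your proposal is correct and follows essentially the same route as the paper: test $A(L,d)$ on the elementary tensor from \eqref{v1}, use exactness to collapse the Smolyak sum to $U^{(1)}_{L-d+1}g_{1,\cdot}\otimes S^{(2)}g_{2,\cdot}\otimes\cdots\otimes S^{(d)}g_{d,\cdot}$, factorize the norm, and invoke \eqref{v2} together with \eqref{UnivLowBoundR} to produce the prefactor $\theta_d\,(m_{L,1}/m_{L-d+1,1})^{\beta}$ in front of $m_{L,1}^{-\beta}$. Your explicit choice $k=L-d+1$ and the remark on how the $\Theta$-condition absorbs the index shift only make explicit what the paper leaves implicit.
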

\begin{proof}
Choosing $f_l$ satisfying (\ref{v1}), due to exactness assumption we obtain
\begin{align*}
  \begin{split}
  A(L,d)f_l & = \sum_{\bl \in Q(L,d)} \bigotimes_{n = 1}^{d} \Delta^{(n)}_{l_n} f_{n,l}
 \\
  & = \sum_{t = 1}^{L-d+1} \Delta_t^{(1)}g_{1,l}\otimes \Delta_1^{(2)}g_{2,l} \cdots \otimes \Delta_1^{(d)}g_{d,l}
  \\
  & =  U^{(1)}_{L-d+1}g_{1,l} \otimes S^{(2)}g_{2,l} \cdots \otimes S^{(d)}g_{d,l}.
 \end{split}
\end{align*}
Let us put $T := \bigotimes_{n = 2}^d S^{(n)}, h = \bigotimes_{n = 2}^d g_{n,l}.$
 Due to (\ref{UnivLowBoundR}) we have
\begin{align*}
e^{{\rm{x}}}(A(L,d))^2 & \geq  \Expec \lVert (S_d - A(L,d))f_l \rVert^2 = \Expec \lVert (S^{(1)} \otimes T - U^{(1)}_{L -d+1} \otimes T)f_l \rVert^2
\\
& = \Expec \lVert (S^{(1)} - U^{(1)}_{L-d+1})g_{1,l} \otimes Th \rVert^2 = \lVert Th \rVert^2 \Expec \lVert (S^{(1)} - U^{(1)}_{L-d+1})g_{1,l} \rVert^2
\\
& = \theta^2_d\Expec \lVert (S^{(1)} - U^{(1)}_{L-d+1})g_{1,l} \rVert^2  \geq \widetilde{C}^2\theta^2_d \left( \kl(1-K^{-1})   \right)^{2\beta} \left( \frac{m_{L,1}}{m_{L-d+1,1}}   \right)^{2\beta} m_{L,1}^{-2\beta}.
 \end{align*}
\end{proof}
\begin{remark}
In particular constants $(\hat{c}_d)_{d \in \mathbb{N}}$ may be chosen all equal e.g. when (\ref{v1}) and (\ref{v2}) are satisfied for all $d \in \mathbb{N}$ with the same constants $\widetilde{C},\widetilde{D}$ and $\theta := \sup_{d \in \mathbb{N}} \theta_d < \infty.$
\end{remark}
\begin{lemma}\label{NBoundInM}
Let there exist constants  $1\leq \kl  \leq \ku, 1< K$ such that for all $n = 1,\ldots, d$ and $l \in \mathbb{N}$ (\ref{m_est_prot}) is satisfied.  Then there exists a constant $\widetilde{c}_d$ such that
$$\widetilde{c}_d m_{L,1} (\log(m_{L,1}))^{d-1} \leq N(L,d)$$
for all $L \geq d.$
 Moreover, if $\xi := \xi(d) := (K^d-1)^{\frac{1}{d}} > 1$ then there exists a constant $\widetilde{C}_d$ such that
$$N(L,d) \leq \widetilde{C}_d m_{L,1} (\log(m_{L,1}))^{(d-1)}$$
for all $L\geq d.$

\end{lemma}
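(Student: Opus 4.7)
The plan is to convert the two-sided estimates on $N(L,d)$ from Lemma \ref{NBound}, which both have the form $(\mathrm{const}) \cdot K^L \binom{L-1}{d-1}$, into estimates in terms of $m_{L,1}$ and $\log m_{L,1}$. The factor $K^L$ is handled directly by (\ref{m_est}), which yields $m_{L,1}/\ku \le K^L \le m_{L,1}/\kl + 1$. Converting $L$ into $\log m_{L,1}$ is the delicate point, and this is precisely where the two halves of the claim diverge.

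For the lower bound I would start from $N(L,d) \ge N_l^{\mathrm{nest}}(L,d) = \kld\bigl(\tfrac{K-1}{K}\bigr)^{d} K^L \binom{L-1}{d-1}$, apply $K^L \ge m_{L,1}/\ku$, and use the elementary bound $\binom{L-1}{d-1} \ge L^{d-1}/d^{d-1}$ valid for $L \ge d$; this follows from monotonicity of $f(L):= \binom{L-1}{d-1}/L^{d-1} = \tfrac{1}{(d-1)!}\prod_{k=1}^{d-1}(1-k/L)$, whose minimum on $\{L\ge d\}$ is $f(d) = 1/d^{d-1}$. It then suffices to dominate $(\log m_{L,1})^{d-1}$ by a multiple of $L^{d-1}$, which follows trivially from $m_{L,1} \le \ku K^L$ and $\ku, K \ge 1$: indeed $\log m_{L,1} \le L\log(\ku K)$. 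Combining these estimates yields $\widetilde{c}_d$ explicitly; no additional hypothesis on $K$ or $d$ is required.

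For the upper bound I would start from $N(L,d) \le \kud \tfrac{K}{K-1} K^L \binom{L-1}{d-1}$, use $K^L \le m_{L,1}/\kl + 1$, and employ $\binom{L-1}{d-1} \le L^{d-1}/(d-1)!$. The remaining task is to bound $L^{d-1}$ above by a multiple of $(\log m_{L,1})^{d-1}$, which is exactly where the hypothesis $\xi > 1$ enters. The main obstacle, and the only genuinely technical step, is to prove that $L \mapsto (K^L-1)^{1/L}$ is monotonically increasing on $[1,\infty)$. I plan to verify this by differentiating its logarithm: setting $y := K^L$, the sign of the derivative equals the sign of $h(y) := y\log y - (y-1)\log(y-1)$, and $h(1) = 0$ together with $h'(y) = \log\bigl(y/(y-1)\bigr) > 0$ gives $h(y) > 0$ for $y > 1$. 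Monotonicity then yields $(K^L-1)^{1/L} \ge (K^d-1)^{1/d} = \xi$ for every $L \ge d$, so $m_{L,1} \ge \kl(K^L-1) \ge \kl \xi^L$ and consequently $L \le \log m_{L,1}/\log \xi$, which is finite and useful precisely because $\xi > 1$. Plugging this back, together with the simplification $m_{L,1}/\kl + 1 \le 2 m_{L,1}/\kl$ (valid because $m_{L,1} \ge \kl \xi^d \ge 1$ when $\xi \ge 1$ and $\kl \ge 1$), gives the claimed upper bound with an explicit $\widetilde{C}_d$ proportional to $\kud K / (\kl (K-1)(d-1)! (\log \xi)^{d-1})$.
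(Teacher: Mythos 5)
Your proposal is correct and follows essentially the same route as the paper's proof: both reduce the claim to the two-sided bounds of Lemma~\ref{NBound}, compare $\binom{L-1}{d-1}$ with $L^{d-1}$ via the same monotonicity observations, and translate between $L$ and $\log m_{L,1}$ using \eqref{m_est} together with the monotonicity of $x\mapsto (K^x-1)^{1/x}$ (which you, unlike the paper, actually verify). The only differences are cosmetic --- you invert the factors $K^L$ and $L^{d-1}$ separately instead of bounding $m_{L,1}(\log m_{L,1})^{d-1}$ as a whole against $K^L L^{d-1}$ --- and they yield the same kind of explicit constants.
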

\begin{proof}
First we prove the upper bound.

On the one hand, due to (\ref{m_est}) it holds
\begin{align*}
m_{L,1} \log(m_{L,1})^{d-1} & \geq \kl \frac{K^L-1}{K^L} \log(\kl(K^L - 1))^{d-1} K^L
\\
& \geq \kl \frac{K^d - 1}{K^d} \log\left(\left[(\kl(K^L-1 ))^{\frac{1}{L}}\right]^L\right)^{d-1} K^L
\\
& \geq \kl \frac{K^d - 1}{K^d} \log(\xi)^{d-1} L^{d-1}K^L,
\end{align*}
where we  used that the function
$[0, \infty) \ni x \mapsto (K^x - 1)^{\frac{1}{x}} $
is increasing.

On the other hand, according to Lemma \ref{NBound}
$$N(L,d) \leq \kud \frac{K}{K-1} \binom{L-1}{d-1}K^L \leq \kud \frac{K}{K-1} \frac{1}{(d-1)!} L^{d-1}K^L.$$
It follows that the constant
$$\widetilde{C}_d = \kud \frac{K}{K-1} \frac{1}{(d-1)!} \left[ \kl \frac{K^d - 1}{K^d} \log(\xi)^{d-1}  \right]^{-1} $$
does the job.

Now we prove the lower bound.

On the one hand due to (\ref{m_est}) we have
\begin{align*}
 m_{L,1} (\log(m_{L,1}))^{d-1} \leq \ku(K^L - 1) \left(\log(\ku(K^L-1))\right)^{d-1} \leq \ku \left(\log(\ku^{\frac{1}{L}}K)\right)^{d-1} K^L L^{d-1}.
\end{align*}
Noticing that $f: [d, \infty) \rightarrow \mathbb{R}, x \mapsto \frac{(x-1)\cdots(x-d+1)}{x^{d-1}}$ is increasing  we obtain
\begin{align*}
& \binom{L-1}{d-1} = \frac{(L-1) \cdots (L-d+1)}{L^{d-1}} \frac{L^{d-1}}{(d-1)!} \geq \frac{(d-1)!}{(d)^{d-1}}\frac{L^{d-1}}{(d-1)!}
\\
& = \frac{1}{(d)^{d-1}} L^{d-1}.
\end{align*}
On the other hand we have due to Lemma \ref{NBound}
\begin{align*}
N(L,d) & \geq \left( \frac{(K-1)\kl}{K} \right)^d K^L \binom{L-1}{d-1}
\\
& \geq \left( \frac{(K-1)\kl}{K} \right)^d \frac{1}{(d)^{d-1}} K^L L^{d-1}.
\end{align*}
It follows that the constant
$$\widetilde{c}_d =  \left(\frac{(K-1)\kl}{K} \right)^d \frac{1}{(d)^{d-1}}\left(\ku \left(\log(\ku^{\frac{1}{d}}K)\right)^{d-1}\right)^{-1}$$
satisfies
$$\widetilde{c}_d m_{L,1} (\log(m_{L,1}))^{(d-1)} \leq N(L,d).$$
\end{proof}

\begin{remark}
Note that the constants $\widetilde{C}_d$ and $\widetilde{c}_d$ from Lemma \ref{NBoundInM} fall superexponentially fast in $d.$
\end{remark}

\begin{corollary}\label{Cor_Lower_Bound}
Let ${\rm{x}} \in \{\rm{r}, \rm{w} \}.$
Let  (\ref{v1}) and (\ref{v2}) hold. Furthermore, let there exist constants  $1\leq \kl  \leq \ku, 1< K$ such that for all $n = 1,\ldots, d$ and $l \in \mathbb{N}$ (\ref{m_est_prot}) is satisfied. Moreover, assume that $m_{L,1} \geq 16$. Then there exists a constant $c_d$   such that given $\delta \in (0,1)$ there exists $ N(\delta)$ such that for every $N \geq N(\delta)$
$$e^{{\rm{x}}}(A(L,d)) \geq c_d \frac{(\log(N))^{(d-1 - \delta) \beta}}{N^{\beta}}$$
with $\beta = \frac{\log(\frac{1}{\widetilde{D}})}{\log(K)}$ and $N = N(L,d).$
\end{corollary}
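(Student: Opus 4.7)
The plan is to combine the lower bound on the error in terms of $m_{L,1}$ from Lemma \ref{LowBoundLem} with the sandwich-type estimates of $N(L,d)$ in terms of $m_{L,1}$ coming from Lemma \ref{NBoundInM} (and, where needed, Lemma \ref{NBound}). Starting from
\begin{equation*}
e^{\rm{x}}(A(L,d)) \,\geq\, \hat{c}_d \, m_{L,1}^{-\beta},
\end{equation*}
the goal is to replace $m_{L,1}$ by an expression in $N$ while preserving all the logarithmic factors. The unconditional lower bound $N \geq \widetilde{c}_d \, m_{L,1} (\log m_{L,1})^{d-1}$ from Lemma \ref{NBoundInM} rearranges to $m_{L,1} \leq N / \bigl(\widetilde{c}_d (\log m_{L,1})^{d-1}\bigr)$, so taking $-\beta$-th powers yields
\begin{equation*}
m_{L,1}^{-\beta} \,\geq\, \widetilde{c}_d^{\,\beta} \,\frac{(\log m_{L,1})^{(d-1)\beta}}{N^{\beta}}.
\end{equation*}
This is the first main step and is essentially a direct algebraic manipulation.

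The second step is to replace $\log m_{L,1}$ by $\log N$ up to an acceptable loss. For this I would use the matching upper bound $N \leq N_u \leq C_d \, K^L \binom{L-1}{d-1}$ from Lemma \ref{NBound}, together with $K^L \leq m_{L,1}/\kl + 1$ coming from \eqref{m_est} and $L \leq C'_d \log m_{L,1}$ (valid once $m_{L,1} \geq 16$, as assumed). This yields an unconditional bound of the form $N \leq \Gamma_d \, m_{L,1} (\log m_{L,1})^{d-1}$, so that
\begin{equation*}
\log N \,\leq\, \log \Gamma_d \,+\, \log m_{L,1} \,+\, (d-1)\log\log m_{L,1}.
\end{equation*}
Because the last summand is $o(\log m_{L,1})$ as $m_{L,1} \to \infty$, inverting this inequality gives, for any $\varepsilon > 0$ and all $N \geq N_0(\varepsilon, d)$, the comparison $\log m_{L,1} \geq (1-\varepsilon)\log N$. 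Note that one should invoke that in the regime of interest $L\to\infty$ together with $m_{L,1} \geq 16$, which is precisely why the technical assumption $m_{L,1} \geq 16$ is built into the statement.

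Combining, I obtain
\begin{equation*}
e^{\rm{x}}(A(L,d)) \,\geq\, \hat{c}_d \,\widetilde{c}_d^{\,\beta}\,(1-\varepsilon)^{(d-1)\beta}\,\frac{(\log N)^{(d-1)\beta}}{N^{\beta}},
\end{equation*}
which is formally stronger than the claim. The last step is to absorb the multiplicative constant $\hat{c}_d \widetilde{c}_d^{\,\beta}(1-\varepsilon)^{(d-1)\beta}$ into the $\delta$-slack in the exponent of the logarithm: since $(\log N)^{\delta\beta} \to \infty$ as $N \to \infty$, one can choose $N(\delta) \geq N_0(\varepsilon,d)$ large enough so that $(\log N)^{\delta\beta}$ dominates $1/(\hat{c}_d\widetilde{c}_d^{\,\beta}(1-\varepsilon)^{(d-1)\beta})$ times $c_d^{-1}$ for the final constant $c_d$ one wishes to use. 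This reorganises the bound into the form $c_d (\log N)^{(d-1-\delta)\beta}/N^{\beta}$ stated in the corollary.

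I expect the only slightly delicate point to be the self-referential inversion in the second step (solving $\log N \lesssim \log m_{L,1} + (d-1)\log\log m_{L,1}$ for $\log m_{L,1}$ with uniform constants), and also the need to establish the upper bound $N \leq \Gamma_d\,m_{L,1}(\log m_{L,1})^{d-1}$ without the condition $\xi(d)>1$ required in the second part of Lemma \ref{NBoundInM}; this is why I would instead start from $N \leq N_u$ of Lemma \ref{NBound} and use $L \lesssim \log m_{L,1}$. Everything else is routine bookkeeping.
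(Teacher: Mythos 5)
Your proposal is correct, but it converts the lower bound in $m_{L,1}$ into a lower bound in $N$ by a genuinely different mechanism than the paper. The paper uses \emph{only} the one-sided cardinality estimate $N \geq \widetilde{c}\, m_{L,1}(\log m_{L,1})^{d-1}$ from Lemma \ref{NBoundInM}: it introduces the comparison function $g(x)=x^{\beta}/(\log x)^{\eta}$ with $\eta=(d-1-\delta)\beta$, verifies that $g\bigl(m(\log m)^{d-1}\bigr)\geq m^{\beta}$ for large $m$ (this is precisely where the $\delta$-loss in the exponent originates, since the inequality fails for $\eta=(d-1)\beta$), checks that $g$ is increasing on $[e^{\eta/\beta},\infty)$, and then passes from $m_{L,1}(\log m_{L,1})^{d-1}$ to $\widetilde{c}_0^{-1}N$ by monotonicity --- no upper cardinality estimate is ever needed. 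You instead use a two-sided sandwich: the same lower cardinality bound gives $m_{L,1}^{-\beta}\geq \widetilde{c}_d^{\,\beta}(\log m_{L,1})^{(d-1)\beta}N^{-\beta}$, and an upper bound $N\leq \Gamma_d\, m_{L,1}(\log m_{L,1})^{d-1}$ lets you invert $\log N\leq \log\Gamma_d+\log m_{L,1}+(d-1)\log\log m_{L,1}$ to obtain $\log m_{L,1}\geq(1-\varepsilon)\log N$ for large $N$. Your remark that the second half of Lemma \ref{NBoundInM} cannot be invoked here without the hypothesis $\xi(d)>1$, and that the needed upper bound must instead be rederived from $N\le N_u$ in Lemma \ref{NBound} together with $K^L\lesssim m_{L,1}$ and $L\lesssim\log m_{L,1}$ (both consequences of \eqref{m_est} and $m_{L,1}\ge 16$), is exactly right and is the only delicate point; the self-referential inversion also goes through because large $N$ forces large $m_{L,1}$. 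The trade-off: your route yields the formally stronger bound $c_d(\log N)^{(d-1)\beta}N^{-\beta}$, so the final absorption of constants into the $(\log N)^{\delta\beta}$ slack is not even necessary (just use $(\log N)^{(d-1)\beta}\ge(\log N)^{(d-1-\delta)\beta}$ for $N\ge e$), whereas the paper's argument is more economical in its inputs at the price of the $\delta$ in the exponent.
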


\begin{proof}
Let $\widetilde{c}$ be such that for every $L \in \mathbb{N}$
$$\widetilde{c} m_{L,1} (\log(m_{L,1}))^{d-1} \leq N(L,d).$$
The existence of $\widetilde{c}$ is guaranteed by Lemma \ref{NBoundInM}.
We put $\widetilde{c}_0 := \min\{\widetilde{c}, 1\}.$

We would like to express the bound from Lemma \ref{LowBoundLem} in terms of the cardinality $N := N(L,d)$. To this end we want to find a function $g:\mathbb{R} \rightarrow \mathbb{R}$ of the form $g(x) = \frac{x^{\beta}}{(\log(x))^{\eta}}$ such that for large $m$
\begin{equation}\label{TranslFun}
 g(m (\log(m))^{d-1}) \geq  m^{\beta}
\end{equation}
implying
$$e^{{\rm{x}}}(A(L,d)) \geq \frac{\hat{c}_d}{m^{\beta}_{L,1}} \geq \frac{\hat{c}_d}{g( m_{L,1} (\log(m_{L,1}))^{d-1})}.$$
We rewrite (\ref{TranslFun}) as
$$\frac{m^{\beta} (\log(m))^{(d-1)\beta}}{(\log(m(\log(m))^{d-1}))^{\eta}} \geq m^{\beta}.$$
Hence (\ref{TranslFun}) holds if
$$\eta \leq \frac{ (d-1) \beta \log(\log(m))}{\log\left(\log(m) + (d-1) \log(\log(m))\right)}$$
and the expression on the right hand side  converges from below to $(d-1)\beta$ as $m$ goes to $\infty.$
To obtain
$$\frac{\hat{c}_d}{g(m_{L,1} \log(m_{L,1})^{d-1})} \geq \frac{\hat{c}_d}{g(\widetilde{c}_0^{-1} N)}$$
 it is sufficient to check that g is increasing on the interval $[m_{L,1} \log(m_{L,1})^{d-1}, \infty).$ Simple calculations reveal that $g$ is increasing on $[e^{\frac{\eta}{\beta}}, \infty) \supset [e^{d-1}, \infty).$
The final step is to notice that
$$e^{{\rm{x}}}(A(L,d)) \geq \frac{\hat{c}_d}{g(\widetilde{c}_0^{-1} N)} = \hat{c}_d \frac{\log(\widetilde{c}_0^{-1} N)^{\eta}}{(\widetilde{c}_0^{-1} N)^{\beta}} \geq \hat{c}_d\widetilde{c}_0^{\beta} \frac{\log(N)^{\eta}}{N^{\beta}}.$$
Putting $c_d := \hat{c}_d \widetilde{c}_0^{\beta}$ finishes the proof.
\end{proof}

\section{Application to Infinite-Dimensional Integration}\label{INF_DIM_INT}

In Theorem \ref{Theo_UB_PW} we provide a sharp result
on randomized infinite-dimensional integration on weighted reproducing kernel Hilbert spaces that parallels the sharp result on deterministic infinite-dimensional integration stated in \cite[Theorem~5.1]{GHHR17}.
Results from \cite{Gne13} and from \cite{PW11} in combination with Theorem~\ref{CardInfUpBound} rigorously establish the sharp randomized result in the special case where the weighted reproducing kernel Hilbert space is based on an anchored univariate kernel. With the help of the embedding tools provided in \cite{GHHR17} this result will be extended to general weighted reproducing kernel Hilbert spaces. Before we can state and prove Theorem  \ref{Theo_UB_PW} we
first have to introduce the setting, cf. \cite{GHHR17}.

For basic results about reproducing
kernels $K$ and the corresponding Hilbert spaces $H(K)$ we refer to \cite{Aro50}.
We denote the norm on $H(K)$ by $\|\cdot \|_K$ and the space of constant functions (on a given domain) by $H(1)$;
here $1$ denotes the constant kernel that only takes the function value
one.

\subsection{Assumptions}\label{subsec:assumptions}

Henceforth we assume that
\begin{enumerate}[label=(A\arabic*)]
\item\label{a1}
$H$ is a vector space of real-valued functions on a domain $D \neq
\emptyset$ with $H(1) \subsetneq H$
\end{enumerate}
and
\begin{enumerate}[label=(A\arabic*)]
\setcounter{enumi}{1}
\item\label{a2}
$\|\cdot\|_1$ and $\|\cdot\|_2$ are seminorms on $H$,
induced by symmetric bilinear forms $\langle\cdot,\cdot\rangle_1$ and
$\langle\cdot,\cdot\rangle_2$, such that
$\|1\|_1=1$ and $\|1\|_2=0$.
\end{enumerate}
Let
\begin{equation}\label{eq3}
\|f\|_H := \left(\|f\|_1^2 + \|f\|_2^2\right)^{1/2}
\hspace{3ex}\text{for $f \in H$.}
\end{equation}
Furthermore, we assume that
\begin{enumerate}[label=(A\arabic*)]
\setcounter{enumi}{2}
\item\label{a3}
$\|\cdot\|_H$ is a norm on $H$ that turns this space into a
reproducing kernel Hilbert space, and there exists a constant $c
\geq 1$ such that
\begin{equation}\label{eq2}
\|f\|_H \leq c\left(\left|\langle f,1\rangle_1\right|+\|f\|_2\right)
\hspace{3ex}\text{for all $f \in H$.}
\end{equation}
\end{enumerate}

Condition \eqref{eq2} is equivalent to the
fact that $\|\cdot\|_H$ and
$\left|\langle\cdot,1\rangle_1\right|+\|\cdot\|_2$
are equivalent norms on $H$.


Let us restate  Lemma~2.1 from  \cite{GHHR17}:

\begin{lemma}\label{lem10}
For each $\gamma >0$ there exists a uniquely determined reproducing kernel
$k_{\gamma}$ on $D\times D$ such that $H(1+k_{\gamma}) = H$ as vector spaces and
\begin{equation*}
\|f\|^2_{1+k_{\gamma}} = \|f\|_1^2 + \frac{1}{\gamma} \|f\|^2_2.
\end{equation*}
Moreover, the norms $\|\cdot \|_H$ and $\|\cdot\|_{1+k_{\gamma}}$ are equivalent and
$H(1) \cap H(k_{\gamma})=\{0\}$.
\end{lemma}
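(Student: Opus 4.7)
The plan is to construct $k_\gamma$ as the reproducing kernel of a suitable closed subspace of $H$. Set
\[
H_0 := \{ f \in H : \langle f,1\rangle_1 = 0 \}.
\]
By Cauchy--Schwarz for $\langle\cdot,\cdot\rangle_1$ together with $\|1\|_1 = 1$ and the identity $\|f\|_H^2 = \|f\|_1^2 + \|f\|_2^2$ from \eqref{eq3}, the linear functional $f\mapsto \langle f,1\rangle_1$ is continuous on $(H,\|\cdot\|_H)$, so $H_0$ is a closed subspace of the RKHS $H$. On $H_0$ I would introduce the bilinear form
\[
\langle f,g\rangle_\gamma := \langle f,g\rangle_1 + \frac{1}{\gamma}\langle f,g\rangle_2.
\]
Restricted to $H_0$, assumption \eqref{eq2} reads $\|f\|_H \leq c\|f\|_2$, which combined with $\|f\|_1 \leq \|f\|_H$ shows that the seminorm $\|\cdot\|_\gamma$ is in fact a norm on $H_0$ and equivalent to $\|\cdot\|_H$ there. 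Consequently $(H_0, \langle \cdot,\cdot\rangle_\gamma)$ is a Hilbert space in which all evaluation functionals are continuous, hence an RKHS; define $k_\gamma$ to be its reproducing kernel.

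Next I would check $H(1)\cap H(k_\gamma) = \{0\}$: since $H(k_\gamma)=H_0$ as sets, any constant $c\cdot 1 \in H_0$ satisfies $c = c\langle 1,1\rangle_1 = 0$. The classical sum-of-kernels theorem from \cite{Aro50} then yields $H(1+k_\gamma) = H(1)\oplus H(k_\gamma)$ as a vector space with $\|h_0 + h_1\|_{1+k_\gamma}^2 = \|h_0\|_1^2 + \|h_1\|_\gamma^2$ for the unique decomposition $h_0 \in H(1)$, $h_1 \in H(k_\gamma)$. To identify this with $H$, I would verify that every $f\in H$ decomposes as $f = \langle f,1\rangle_1 \cdot 1 + (f - \langle f,1\rangle_1 \cdot 1)$, with the first summand in $H(1)$ and the second in $H_0$; this gives $H = H(1)\oplus H_0 = H(1+k_\gamma)$ as vector spaces.

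For the norm identity, write $f = c_f\cdot 1 + g_f$ with $c_f := \langle f,1\rangle_1$ and $g_f \in H_0$. The orthogonality $\langle c_f\cdot 1, g_f\rangle_1 = 0$ holds by the definition of $H_0$, while $\|1\|_2 = 0$ combined with Cauchy--Schwarz for $\langle \cdot,\cdot\rangle_2$ gives $\langle c_f\cdot 1, g_f\rangle_2 = 0$. Hence $\|f\|_1^2 = c_f^2 + \|g_f\|_1^2$ and $\|f\|_2^2 = \|g_f\|_2^2$, so
\[
\|f\|_{1+k_\gamma}^2 = c_f^2 + \|g_f\|_1^2 + \frac{1}{\gamma}\|g_f\|_2^2 = \|f\|_1^2 + \frac{1}{\gamma}\|f\|_2^2,
\]
as required. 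Equivalence of $\|\cdot\|_H$ and $\|\cdot\|_{1+k_\gamma}$ is then immediate with constants $\min(1,\gamma^{-1/2})$ and $\max(1,\gamma^{-1/2})$, and uniqueness of $k_\gamma$ follows from the fact that the reproducing kernel of a Hilbert function space is uniquely determined. The one step that requires care is verifying that $\langle\cdot,\cdot\rangle_\gamma$ is positive definite on $H_0$ and that $H_0$ is complete in the associated norm; both hinge on exploiting \eqref{eq2} on $H_0$ together with the closedness of $H_0$ in $H$, which is the main technical point.
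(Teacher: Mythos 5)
Your proof is correct and complete. Note that the paper itself gives no proof of this lemma: it is quoted verbatim as Lemma~2.1 of \cite{GHHR17}, so there is no in-paper argument to compare against. Your construction is the natural one and, as far as I can tell, matches the standard route: the hyperplane $H_0=\{f\in H: \langle f,1\rangle_1=0\}$ is closed because $|\langle f,1\rangle_1|\le \|f\|_1\le\|f\|_H$; on $H_0$ assumption \eqref{eq2} collapses to $\|f\|_H\le c\|f\|_2$, which is exactly what makes $\langle\cdot,\cdot\rangle_\gamma$ positive definite and $\|\cdot\|_\gamma$ equivalent to $\|\cdot\|_H$ there (so $(H_0,\langle\cdot,\cdot\rangle_\gamma)$ is complete with continuous point evaluations, hence an RKHS); the decomposition $f=\langle f,1\rangle_1\cdot 1+(f-\langle f,1\rangle_1\cdot 1)$ together with $H(1)\cap H_0=\{0\}$ and Aronszajn's sum-of-kernels theorem then yields $H(1+k_\gamma)=H$ with the stated Pythagorean norm identity, the cross terms vanishing because $g_f\in H_0$ and $\|1\|_2=0$. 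The uniqueness argument is also fine: the vector space together with the norm (hence, by polarization, the inner product) determines the reproducing kernel $1+k_\gamma$, and therefore $k_\gamma$. The only place where a reader might want one more word is the verification that $H(k_\gamma)=H_0$ \emph{with} $\|\cdot\|_{k_\gamma}=\|\cdot\|_\gamma$, but that is immediate from your definition of $k_\gamma$ as the kernel of $(H_0,\langle\cdot,\cdot\rangle_\gamma)$, so I see no gap.
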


Note that for the special value $\gamma =1$ we have $\|\cdot\|_{1+k_1} = \|\cdot\|_H$.

The next example illustrates the assumptions and the statement of Lemma \ref{lem10}; for more information and a slight generalization see
\cite[Example~2.3]{GHHR17}.

\begin{exmp}\label{e33}
Let $D:=
[0,1)$ and  $r>1/2$.
The \emph{periodic Sobolev space} $K_r = K_r([0,1))$ (also known as \emph{Korobov space}) is the
Hilbert space of all
$f\in L^2([0,1])$ with finite norm
\[
 \|f\|_{r}^2 := |\hat{f}(0)|^2 + \sum_{h\in\Z \setminus \{0\}} | \hat{f}(h)|^2 h^{2r},
\]
where
$\hat{f}(h)
=\int_0^{1} f(t)e^{- 2\pi i h t}\,\mathrm dt$
is the $h$-th Fourier coefficient of $f$. The functions in $K_r$ are continuous and periodic.
It is easily checked that the reproducing kernel of $K_r$ is given by
\begin{equation}\label{kernel_korobov}
1+k_1(x,y) = 1 + \sum_{h\in\Z \setminus \{0\}} h^{-2r} e^{2\pi i h (x-y)},
\hspace{3ex}x,y \in [0,1).
\end{equation}
Consider the pair of seminorms on $K_r$ given by
\begin{equation*}
\|f\|_{1} = | \hat{f}(0)|
\hspace{3ex}\text{and}\hspace{3ex}
\|f\|_{2}^2 = \sum_{h\neq 0} | \hat{f}(h)|^2 \, h^{2r}.
\end{equation*}
The assumptions \ref{a1}, \ref{a2},
and \ref{a3} are easily verified.
For $\gamma >0$ we have $k_{\gamma} = \gamma \cdot k_{1}$.


\end{exmp}

Further examples of spaces that satisfy the assumptions \ref{a1}, \ref{a2},
and \ref{a3} are, for instance, the \emph{(non-periodic) Sobolev spaces} $W^{r,2}([0,1])$ of smoothness
$r\in\N$ endowed with either the
standard norm, the anchored norm or the ANOVA norm, see \cite[Example~2.1]{GHHR17}.

We now want to study weighted tensor product Hilbert spaces of multivariate functions, which implies that we have to consider \emph{product weights} as introduced in \cite{SW98}. More precisely, we
consider a sequence
$\bg=\left(\gamma_j\right)_{j\in \N}$ of positive
weights that satisfies
\begin{equation}\label{summable}
\sum_{j=1}^\infty \gamma_j < \infty.
\end{equation}
The decay of the weights is quantified by
\begin{equation*}
\decay(\bg) := \sup \Bigl(
\Bigl\{ p > 0 \, \Big|\,
\sum^\infty_{j=1} \gamma_j^{1/p} < \infty \Bigr\} \cup \{0\}
\Bigr);
\end{equation*}
due to \eqref{summable} we have $\decay(\bg) \ge 1$.
For each weight $\gamma_j$ let $k_{\gamma_j}$ be the kernel from Lemma \ref{lem10}.
With the help of the weights we can define
spaces of functions of finitely many variables.
For $d\in\N$
we define the reproducing kernel $K^{\bsgamma}_d$ on
$D^d\times D^d$ by
\begin{align}\label{eq100}
K^{\bsgamma}_d(\bx,\by) := \prod_{j= 1}^d
(1+k_{\gamma_j}(x_j,y_j)),
\hspace{3ex} \bx, \by\in D^{d}.
\end{align}
The reproducing kernel Hilbert space $H(K^{\bsgamma}_d)$
is the (Hilbert space) tensor product of the spaces $H(1+k_{\gamma_j})$.

Now we want to define a space of functions of infinitely many variables.
The natural domain for the counterpart of \eqref{eq100} for
infinitely many variables is given by
\begin{align}\label{eq101}
\X^{\bg} :=
\Bigl\{{\bf x} \in D^{\mathbb{N}} \,\Big|\, \prod_{j=1}^\infty (1+
k_{\gamma_j}(x_j,  x_j) )<\infty \Bigr\}.
\end{align}
Let $a,a_1,\dots,a_n\in D$ be arbitrary. Due to \cite[Lemma~2.2]{GHHR17} we have
$(a_1,\dots,a_n,a,a,\dots)\in \X^{\bg}$,
and in particular $\X^{\bg} \neq\emptyset$.
We define the reproducing kernel $K^{\bsgamma}_\infty$ on
$\X^{\bsgamma}\times \X^{\bsgamma}$ by
\begin{align}\label{eq102}
K^{\bsgamma}_\infty({\bf x},{\bf y}) :=
\prod_{j=1}^\infty (1+k_{\gamma_j}(x_j, y_j)),
\hspace{3ex}\bx,\by \in \X^{\bg}.
\end{align}
For a function $f\colon D^d \to \R$ we define
$\psi_d f\colon \X^\bg \to \R$ by
\begin{align}\label{g8}
\left( \psi_d f \right) (\bx)
=f(x_1,\dots,x_d)
\hspace{3ex}\text{for $\bx\in\X^\bg$.}
\end{align}
Due to \cite[Lemma~2.3]{GHHR17} $\psi_d$ is a linear isometry from
$H(K_d^{\bg})$ into $H(K^{\bg}_\infty)$,
and
\begin{equation}\label{dense_subspace}
\bigcup_{d\in \N}\psi_d (H(K_d^{\bg}))
\hspace{2ex}
\text{is a dense subspace of $H(K^{\bg}_\infty)$.}
\end{equation}

\subsection{The Integration Problem}

To obtain a well-defined integration problem
we assume that $\rho$ is a probability measure on $D$ implying
\begin{align*}
H \subseteq L^1(D,\rho).
\end{align*}
Let $\rho^d$ and $\rho^\N$ denote the corresponding product
measures on $D^d$ and $D^\N$, respectively.

Due to \cite[Lemma~3.1]{GHHR17} we have
for all  $d\in\N$ that
\begin{align*}
H(K_d^\bg)\subseteq L^1(D^d,\rho^{d}),
\end{align*}
and the respective embeddings $J_d$
from $H(K_d^\bg)$ into $L^1(D^d,\rho^{d})$
are continuous with
\begin{equation}\label{uniformly_bounded}
\sup_{d\in\N}\|J_d\|_{\rm op}
<\infty.
\end{equation}
Define the linear functional $I_d\colon H(K_d^\bg)\to \R$ by
\begin{align*}
I_d(f)
=\int_{D^d}f\,{\rm d}\rho^{d},
\hspace{3ex}\text{$f\in H(K_d^\bg)$.}
\end{align*}
Note that $\|I_d\|_{\rm op} \geq 1$, since $I_d(1) = 1$ and $\|1\|_{K^\bg_d}=1$.
Furthermore, $\|I_d\|_{\rm op} \leq \|J_d\|_{\rm op}$,
and therefore \eqref{uniformly_bounded} implies
\begin{align}\label{eq104}
1 \leq \sup_{d\in\N}\|I_d\|_{\rm op}
<\infty.
\end{align}
This yields the existence of a uniquely determined bounded linear functional
\begin{equation}\label{integration_functional}
       I_\infty \colon H(K^\bg_\infty)\to\R
\hspace{2ex}\text{such that}\hspace{2ex}
I_\infty (\psi_d f)
=I_d(f)
\hspace{2ex}\text{for all $f\in H(K^\bg_d)$,\,$d\in\N$,}
\end{equation}
cf. \cite[Lemma~3.2]{GHHR17}.

Note that every $f\in H(K^\bg_\infty)$ is measurable
with respect to the trace of the product $\sigma$-algebra
on $D^\N$. (This follows  from \eqref{dense_subspace},
\eqref{uniformly_bounded}, and the fact that the pointwise limit
of measurable functions is again measurable.)

If $\X^\bg$ is measurable, $\rho^\N(\X^\bg)=1$, and
$H(K^\bg_\infty)\subseteq L^1(\X^\bg, \rho^\N)$, then
the bounded linear functional
\eqref{integration_functional} is given by
\begin{align*}
I_\infty (f)
=\int_{\X^\bg} f\,\mathrm d\rho^\N
\hspace{3ex}\text{for all $f\in H(K^\bg_\infty)$.}
\end{align*}
For sufficient conditions under which these assumptions are fulfilled
we refer to \cite{GMR12}.

We consider the integration problem on $H(K^\bg_\infty)$ consisting in the
approximation of the functional $I_\infty$ by randomized
algorithms that use function evaluations (i.e., \emph{standard information}) as admissable
information.

\subsection{The Unrestricted Subspace Sampling Model}

We use the cost model introduced in \cite{KSWW10a}, which we refer to as \emph{unrestricted subspace sampling model}.
It only accounts for the cost of function evaluations. To define the cost of a function evaluation, we fix an anchor $a \in D$ and a
non-decreasing function
$$\$: \mathbb{N}_0 \to [1,\infty].$$
Put
\begin{equation*}
\mathcal{U}:= \{u\subset \N\,|\, |u| <\infty\}.
\end{equation*}
For each $u\in\mathcal{U}$ put
\begin{equation*}
 \mathcal{T}_{u} := \{ {\bf t} \in D^\mathbb{N} \,|\, t_j = a \hspace{1ex}\text{for all}
\hspace{1ex} j\in\mathbb{N}\setminus u\}.
\end{equation*}
To simplify the representation, we confine ourselves to non-adaptive randomized linear algorithms of the form
\begin{equation}\label{Qn}
Q(f) = \sum^n_{i=1} w_i f(\bst^{(i)}),
\end{equation}
where the number $n \in \N$ of
knots is fixed and the knots $\bst^{(i)}$ as well as the coefficients
$w_i\in \R$ are random variables with values in some $\mathcal{T}_{v_i}$, $v_i\in \mathcal{U}$, and in $\R$,
respectively. (We discuss a larger class of algorithms in Remark~\ref{Baustelle}.)
The cost of $Q$ is given by
\begin{align}\label{cost_function}
\cost(Q)
=\sum_{i=1}^n
\inf\{ \$(|u|) \mid
\text{$u \in \mathcal{U}$ such that
$\bst^{(i)}(\omega) \in \mathcal{T}_u$ for all $\omega \in \Omega$} \}.
\end{align}
In the definition of the cost function an inclusion property has to hold for all $\omega \in \Omega$. Often this worst case point of view is replaced by an average case (cf., e.g., \cite{MGR09} or \cite{DG13, Gne13, PW11}). We stress that such a replacement would not affect the cost of the algorithms that we employ to establish our upper bounds for the $N$-th minimal errors; for lower bounds cf. Remark~\ref{Baustelle}(iii).

Let $\rm{x} \in \{\rm{d}, \rm{r}, \rm{w}\}$. For $N\ge 0$  let us define the  \emph{$N$-th minimal error on} $H(K^\bg_\infty)$ by
\begin{equation*}
 e^{\rm x}(N,K^\bg_\infty) := \inf\{ e^{\rm x}(I_\infty, Q) \,|\, Q\hspace{1ex}\text{as in \eqref{Qn}}
\hspace{1ex}\text{and}\hspace{1ex}{\rm cost}_{}(Q) \le N\},
\end{equation*}
where in the case $\rm{x} = \rm{d}$ the algorithms have to be deterministic, while
in the case $\rm{x} \in \{\rm{r}, \rm{w}\}$ they are allowed to be randomized.
The \emph{(polynomial) convergence order of the $N$-th minimal errors of infinite-dimensional integration} is given by
\begin{align}\label{lambdacost}
\lambda^{{\rm x}}(K^\bg_\infty)
:=\sup \Bigl\{ \alpha \ge 0 \mid \sup_{N\in \N}
e^{\rm x}(N,K^\bg_\infty) \cdot  N^\alpha < \infty
\Bigr\}.
\end{align}

In analogy to our definitions for infinite-dimensional integration, we consider for univariate integration on $H(1+k_1)$ also
linear randomized algorithms $Q$ of the form \eqref{Qn}, except that this time the knots $\bst^{(i)}$ are, of course, random variables with values in $D$. The cost of such an algorithm is simply the number $n$ of function evaluations, and \emph{$N$-th minimal errors
on} $H(1+k_1)$ are
given by
\begin{equation*}
 e^{\rm x}(N,1+k_1) := \inf\{ e^{\rm x}(I_1, Q) \,|\, Q\hspace{1ex}\text{as in \eqref{Qn}}
\hspace{1ex}\text{and}\hspace{1ex}n \le N\}.
\end{equation*}
The \emph{(polynomial) convergence order of the $N$-th minimal errors of univariate integration} is given by
\begin{align*}
\lambda^{{\rm x}}(1+k_1)
:=\sup \Bigl\{ \alpha \ge 0 \mid \sup_{N\in \N}
e^{\rm x}(N, 1+k_1) \cdot  N^\alpha < \infty
\Bigr\}.
\end{align*}

\begin{remark}\label{Rem_Rel_Min_Err}
Let $K\in \{K^{\bg}_\infty, 1 + k_1\}$
and, accordingly, $I\in \{I_\infty, I_1\}$. Obviously,
\begin{equation*}
e^{\rm r}(N, K) \le e^{\rm w}(N, K) \le e^{\rm d}(N, K),
\hspace{3ex}\text{and thus}\hspace{3ex}
\lambda^{\rm r}(K) \ge \lambda^{\rm w}(K) \ge \lambda^{\rm d}(K).
\end{equation*}
Furthermore, it is easy to see that  $e^{\rm w}(N,K) \ge e^{\rm d}(N,K)$ holds:
If $Q$ is an arbitrary randomized algorithm of the form \eqref{Qn} with $\cost(Q) \le N$, then for every $\omega \in \Omega$ the cost of the deterministic algorithm $Q(\omega)$  is at most $N$, implying
$$
\sup_{\|f\|_K \le 1} |(I-Q(\omega))f| \ge e^{\rm d}(N,K),
$$
which in turn leads to $e^{\rm w}(I,Q) \ge e^{\rm d}(N,K)$.
Hence we obtain
\begin{equation}\label{d=w}
e^{\rm w}(N,K) = e^{\rm d}(N,K)
\hspace{3ex}\text{and}\hspace{3ex}
\lambda^{\rm w}(K) = \lambda^{\rm d}(K).
\end{equation}
\end{remark}

\subsection{A Sharp Result on Infinite-Dimensional Integration}

The next theorem determines the exact polynomial convergence rate of the $N$-th minimal
errors of infinte-dimensional integration on weighted reproducing kernel Hilbert spaces.

\begin{theorem}\label{Theo_UB_PW}
Let $\rm{x} \in \{\rm{r}, \rm{w}\}$.
If the cost function $\$$ satisfies $\$(\nu) = \Omega(\nu)$ and
$\$(\nu) = O(e^{\sigma \nu})$ for some $\sigma\in (0,\infty)$,
then we have
\begin{equation}\label{sharp_bound}
\lambda^{{\rm x}}(K^{\bg}_\infty) =
\min \left\{ \lambda^{{\rm x}}(1+k_1),\,
\frac{{\rm decay}(\bg) - 1}{2} \right\}.
\end{equation}
\end{theorem}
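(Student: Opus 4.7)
The plan is to establish two matching inequalities for $\lambda^{\rm x}(K^{\bg}_\infty)$, namely the upper bound $\lambda^{\rm x}(K^{\bg}_\infty) \le \min\{\lambda^{\rm x}(1+k_1),({\rm decay}(\bg)-1)/2\}$ and the matching lower bound. The equality \eqref{d=w} in Remark \ref{Rem_Rel_Min_Err} allows us to handle the worst case criterion with essentially the same tools as used for the deterministic criterion in \cite{GHHR17}, so the new content is the randomized case. The strategy reduces the problem in two stages: first treat the special case where the univariate space is based on an anchored decomposition (so that $k_\gamma = \gamma\cdot k_1$ and functions vanish at the anchor), and then transfer to the general setting via the isometric embedding tools from \cite{GHHR17}.

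For the upper bound on $\lambda^{\rm x}(K^{\bg}_\infty)$ I would argue as follows. The univariate bound $\lambda^{\rm x}(K^{\bg}_\infty)\le\lambda^{\rm x}(1+k_1)$ is obtained by embedding the univariate integration problem into the infinite-dimensional one: using the isometry $\psi_1$ one identifies $H(1+k_{\gamma_1})$ with a subspace of $H(K^{\bg}_\infty)$ on which $I_\infty$ coincides with $I_1$ up to an explicit scaling, and any cost-$N$ randomized algorithm on $H(K^{\bg}_\infty)$ projects to a cost-$N$ randomized algorithm for the univariate problem with the same error. The decay bound $\lambda^{\rm x}(K^{\bg}_\infty)\le({\rm decay}(\bg)-1)/2$ is a classical lower bound argument: pick orthogonal fooling functions supported on coordinate directions whose weights $\gamma_j^{1/p}$ are barely summable for $p$ slightly above ${\rm decay}(\bg)$, and use Bakhvalov's technique in the randomized setting (bounding the randomized error below by an average case / Monte Carlo variance argument) to obtain a lower bound on $e^{\rm x}(N,K^{\bg}_\infty)$ of the claimed order. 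This part is an adaptation of the corresponding deterministic lower bound in \cite[Theorem~5.1]{GHHR17} to the randomized criteria.

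For the lower bound on $\lambda^{\rm x}(K^{\bg}_\infty)$, I would first treat the anchored case. Here one applies a multivariate decomposition method: decompose $f\in H(K^{\bg}_\infty)$ into its anchored components $f_u$ indexed by $u\in\mathcal{U}$, truncate the sum to a finite active set, and apply on each $|u|$-dimensional integral a randomized Smolyak algorithm built from optimal univariate randomized rules for the corresponding $\lambda^{\rm x}(1+k_1)$-rate. Theorem \ref{CardInfUpBound} then gives error bounds for each Smolyak block of the form $N_u^{-\alpha}(1+\log N_u/(|u|-1))^{(|u|-1)(\alpha+1)}$ with $\alpha$ close to $\lambda^{\rm x}(1+k_1)$, while the weights $\gamma_j$ enter through norms of the coordinate projections. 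Optimizing the active set and the level/cost allocation across $u$ along the lines of \cite{PW11,Gne13} (which were tailored to the anchored setting) yields the rate $\min\{\lambda^{\rm x}(1+k_1),({\rm decay}(\bg)-1)/2\}-\e$ for arbitrary $\e>0$; the assumptions $\$(\nu)=\Omega(\nu)$ and $\$(\nu)=O(e^{\sigma\nu})$ ensure that the cost function is neither too small (which could let one cheat) nor so large that truncating dimension is prohibitive. The extension from the anchored case to a general kernel satisfying \ref{a1}--\ref{a3} is then effected by the embedding tools in \cite[Sec.~4]{GHHR17}, which interpret any admissible $H(K^{\bg}_\infty)$ as a subspace of an anchored-style space with comparable norms, preserving both the convergence rate and the cost model.

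The main obstacle will be the lower bound on $\lambda^{\rm x}(K^{\bg}_\infty)$ in the randomized setting, specifically the careful optimization of the MDM over active sets and Smolyak levels when the building-block rate is $\lambda^{\rm x}(1+k_1)$ rather than the deterministic rate. In the deterministic case this optimization is done in \cite{Gne13,PW11}, but one must verify that the logarithmic overheads in Theorem \ref{CardInfUpBound} do not accumulate catastrophically across the sum over $u\in\mathcal{U}$ once one uses randomized building blocks. A secondary technical point is the decay-based lower bound in the randomized case: the fooling functions and the application of the randomized Bakhvalov technique must be compatible with the unrestricted subspace sampling cost \eqref{cost_function}, where the cost of an evaluation grows with the support size, so that one cannot trivially sample a high-dimensional coordinate direction for free.
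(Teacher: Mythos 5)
Your proposal follows essentially the same route as the paper: the worst-case criterion is reduced to the deterministic result of \cite{GHHR17} via the identity \eqref{d=w}, and the randomized case is handled by first treating the anchored situation (where the upper bound on the minimal errors comes from multivariate decomposition methods built on randomized Smolyak blocks satisfying \eqref{pw_req_est} via Theorem \ref{CardInfUpBound} and \cite[Cor.~1]{PW11}, and the matching lower bound on the errors from \cite[Eqn.~(21)]{Gne13}) and then transferring to general kernels through the embedding machinery of \cite{GHHR17} after constructing the associated anchored kernel $k_a$ with $\lambda^{\rm r}(1+k_1)=\lambda^{\rm r}(1+k_a)$. The only real difference is one of economy: where you propose to re-derive the Bakhvalov-type error lower bounds and the MDM optimization, the paper simply cites the existing randomized results of \cite{Gne13} and \cite{PW11}.
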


Notice that the theorem implies that in the randomized setting infinite-dimensional integration on weighted reproducing kernel Hilbert spaces is (essentially) not harder than the corresponding univariate integration problem (as far as the polynomial convergence rate is concerned) as long as the weights decay fastly enough, i.e., as long as
$${\rm decay}(\bg) \ge 2  \lambda^{{\rm x}}(1+k_1) +1.$$

\begin{proof}
Let us first consider the case ${\rm x} = {\rm r}$.
In the special case where the reproducing kernel $k_1$ is \emph{anchored in $a$} (i.e., $k_1(a,a)=0$) and satisfies $\gamma k_1 = k_\gamma$ for all $\gamma >0$ (cf. Lemma~\ref{lem10}), the statement of the Theorem follows from \cite{Gne13} and from \cite{PW11} in combination with Theorem~\ref{CardInfUpBound}, as we will explain below in detail.

For a general reproducing kernel $k_1$ we need to find a suitably associated reproducing kernel $k_a$ anchored in $a$ and satisfying $\gamma k_a = (k_a)_\gamma$ for all $\gamma >0$ to employ the embedding machinery from \cite{GHHR17} to obtain the desired result \eqref{sharp_bound}. To this purpose we
consider the bounded linear
functional $\xi: H\to \R$,  $f \mapsto f(a)$, where $a\in D$ is our fixed anchor.
We define a new pair of seminorms on $H$ by
\begin{equation*}
\|f\|_{1,a} := |\xi(f)|
\hspace{3ex}\text{and}\hspace{3ex}
\|f\|_{2,a} := \|f-\xi(f)\|_{H}.
\end{equation*}
Notice that $\|\cdot \|_{1,a}$ is induced by the symmetric bilinear form
$\langle f,g \rangle_{1,a} := \xi(f) \cdot \xi(g)$.
This new pair of seminorms satisfies obviously assumption  \ref{a2} and
the norms $\|\cdot \|_{H} = ( \|\cdot\|^2_1 + \|\cdot\|^2_2)^{1/2}$ and
$\|\cdot \|_{H,a} := ( \|\cdot\|^2_{1,a} + \|\cdot\|^2_{2,a})^{1/2}$
are equivalent norms on $H$. Hence $\|\cdot\|_{H,a}$ turns $H$ into a reproducing kernel Hilbert space, and satisfies \eqref{eq2}
with $c=1$ since
\begin{equation*}
\|f\|_{H,a} \le \| f \|_{1,a} + \|f\|_{2,a}
= | \langle f, 1 \rangle_{1,a}| + \|f\|_{2,a}
\hspace{3ex}\text{for all $f\in H$.}
\end{equation*}
Thus the new pair of seminorms satisfies also \ref{a3}.
Furthermore, if $k_a$ is the reproducing kernel on $D\times D$ such that
\begin{equation*}
H(k_a) = \{f\in H\,|\, f(a)=0\}
\end{equation*}
and
\begin{equation*}
\|f\|_{k_a} = \|f\|_{1+ k_a} = \|f\|_{H,a}
\hspace{3ex}\text{for all $f\in H(k_a)$,}
\end{equation*}
then $k_a$ is anchored in $a$ and moreover we have
$H(1+k_a) = H$ as vector spaces, $H(1) \cap H(k_a) = \{0\}$, and
\begin{equation*}
\|f\|^2_{1+\gamma k_a} = \|f\|^2_{1,a} + \frac{1}{\gamma} \|f\|^2_{2,a}
\end{equation*}
for all $\gamma >0$, $f\in H$, implying $(k_a)_\gamma = \gamma k_a$, see \cite[Rem.~2.2]{GHHR17}.
Since $ \|\cdot\|_H = \|\cdot\|_{1+k_1}$ and $\|\cdot \|_{H,a} = \|\cdot \|_{1+k_a}$ are equivalent norms on $H(1+k_1) = H = H(1+k_a)$, we obtain $\lambda^{{\rm r}}(1+k_1) =
\lambda^{{\rm r}}(1+k_{a})$.
Due to \cite[Thm.~2.3]{GHHR17} we have
\begin{align*}
\X^{\bg,a} :=
\Bigl\{{\bf x} \in D^{\mathbb{N}} \,\Big|\, \prod_{j=1}^\infty (1+
\gamma_j k_{a}(x_j,  x_j) )<\infty \Bigr\} = \X^{\bg}.
\end{align*}
According to \eqref{eq102}
we define $K^{\bsgamma,a}_\infty: \X^{\bg} \times \X^{\bg} \to \R$ by
\begin{equation*}
K^{\bsgamma,a}_\infty({\bf x},{\bf y}) :=
\prod_{j=1}^\infty (1+\gamma_j k_{a}(x_j, y_j))
\hspace{3ex}\text{for $\bx,\by \in \X^{\bg}$.}
\end{equation*}
Now we consider the integration problem in $H(K^{\bsgamma,a}_\infty)$ and may use \cite[Subsect.~3.2.1]{Gne13} and \cite[Cor.~1]{PW11} in combination with Theorem \ref{CardInfUpBound}.
Indeed, due to Theorem \ref{CardInfUpBound}
we may choose linear randomized algorithms with convergence rates $\alpha$ arbitrarily
close to $\lambda^{{\rm r}}(1+k_{1}) = \lambda^{{\rm r}}(1+k_{a}) $ to obtain via the randomized Smolyak method algorithms that
satisfy \eqref{pw_req_est}  for ${\rm x} = {\rm r}$
(and consequently also \cite[Eqn.~(10)]{PW11}).
Now \cite[Cor.~1]{PW11} ensures that
\begin{equation*}
\lambda^{{\rm r}}(K^{\bg, a}_\infty) \ge
\min \left\{ \lambda^{{\rm r}}(1+k_1),\,
\frac{{\rm decay}(\bg) - 1}{2} \right\}.
\end{equation*}
Furthermore, we have due to \cite[Eqn.~(21)]{Gne13}
\begin{equation*}
\lambda^{{\rm r}}(K^{\bg,a}_\infty) \le
\min \left\{ \lambda^{{\rm r}}(1+k_1),\,
\frac{{\rm decay}(\bg) - 1}{2} \right\}.
\end{equation*}
Due to \cite[Cor.~5.1]{GHHR17} these estimates also hold for
$H(K^{\bg}_\infty)$.

Let us now consider the case ${\rm x} = {\rm w}$.
Due to \eqref{d=w}, identity \eqref{sharp_bound} follows
directly from the deterministic result \cite[Theorem~5.1]{GHHR17}.
\end{proof}

We now provide two corollaries and to add some remarks.

Theorem~\ref{Theo_UB_PW}, which deals with randomized algorithms, and the corresponding deterministic theorem \cite[Theorem~5.1]{GHHR17} allow immediately to compare the power of deterministic and randomized algorithms.

\begin{corollary}\label{Power}
Let the assumptions of Theorem~\ref{Theo_UB_PW} hold. For infinite-dimensional integration on $H(K^{\bg}_\infty)$
randomized algorithms are superior to deterministic algorithms, i.e., $\lambda^{{\rm r}}(K^{\bg}_\infty) > \lambda^{{\rm d}}(K^{\bg}_\infty)$, if and only if
\begin{equation*}
\lambda^{{\rm r}}(1+k_1) > \lambda^{{\rm d}}(1+k_1)
\hspace{3ex}\text{and}\hspace{3ex}
\decay(\bg) > 1 + 2  \lambda^{{\rm d}}(1+k_1)
\end{equation*}
are satisfied.
\end{corollary}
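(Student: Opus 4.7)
The plan is to combine Theorem~\ref{Theo_UB_PW} with the identity $\lambda^{\rm w}(K^{\bg}_\infty) = \lambda^{\rm d}(K^{\bg}_\infty)$ from~\eqref{d=w}, reducing the corollary to an elementary comparison of two minima. To that end I will introduce the abbreviations $a := \lambda^{\rm r}(1+k_1)$, $b := \lambda^{\rm d}(1+k_1)$, and $c := (\decay(\bg)-1)/2$. Applying Theorem~\ref{Theo_UB_PW} with ${\rm x}={\rm r}$ gives $\lambda^{\rm r}(K^{\bg}_\infty) = \min(a,c)$, while applying it with ${\rm x}={\rm w}$ and invoking~\eqref{d=w} yields $\lambda^{\rm d}(K^{\bg}_\infty) = \lambda^{\rm w}(K^{\bg}_\infty) = \min(b,c)$. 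Thus the claim reduces to the statement that $\min(a,c) > \min(b,c)$ if and only if $a > b$ and $c > b$.

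Before the case analysis, I will record that $a \ge b$ holds universally. This is immediate from the inclusion $\mathcal{O}^{\deter} \subset \mathcal{O}^{\ran}$ used in Section~\ref{SECT2}: a deterministic algorithm of cost at most $N$ is in particular a randomized algorithm of cost at most $N$, so $e^{\rm r}(N,1+k_1) \le e^{\rm d}(N,1+k_1)$, which by the definition of the polynomial convergence order forces $a \ge b$. Consequently $\min(a,c) \ge \min(b,c)$ automatically, and the question is only when this inequality is strict.

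The remaining step is the elementary case analysis. If $a = b$, both minima coincide trivially; if $c \le b \le a$, both minima equal $c$. Conversely, assume $a > b$ and $c > b$. Either $c \ge a$, in which case $\min(a,c) = a > b = \min(b,c)$, or $b < c < a$, in which case $\min(a,c) = c > b = \min(b,c)$; in either sub-case the inequality is strict. Translating $c > b$ back to the original parameters gives $\decay(\bg) > 1 + 2\lambda^{\rm d}(1+k_1)$, which together with $a > b$ is exactly the pair of conditions in the statement. The argument has no genuine obstacle; the only subtlety worth flagging is that Theorem~\ref{Theo_UB_PW} delivers formulas for ${\rm x} \in \{{\rm r},{\rm w}\}$ but not directly for ${\rm x}={\rm d}$, so the invocation of~\eqref{d=w} is essential to pass from the worst-case randomized rate to the deterministic rate.
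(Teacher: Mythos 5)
Your proposal is correct and follows essentially the same route the paper intends: the corollary is an immediate consequence of the two formulas $\lambda^{\rm r}(K^{\bg}_\infty)=\min(a,c)$ and $\lambda^{\rm d}(K^{\bg}_\infty)=\lambda^{\rm w}(K^{\bg}_\infty)=\min(b,c)$ together with the elementary observation that, since $a\ge b$, one has $\min(a,c)>\min(b,c)$ exactly when $a>b$ and $c>b$. The only cosmetic difference is that you obtain the deterministic rate via the ${\rm x}={\rm w}$ case of Theorem~\ref{Theo_UB_PW} and \eqref{d=w}, whereas the paper points to the deterministic result of \cite{GHHR17} directly; these are the same computation.
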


The next corollary on infinite-dimensional integration on weighted Korobov spaces in the randomized setting parallels
\cite[Theorem~5.5]{GHHR17}, which discusses the deterministic setting.


\begin{corollary}\label{Korobov}
Let $r>1/2$, and let the univariate reproducing kernel $k_1$ be as in \eqref{kernel_korobov}. Then the  weighted Korobov space $H(K^\bg_\infty)$ is an infinite tensor product of the
periodic Korobov space $H(1+k_1) = K_r([0,1))$ of smoothness $r$, see
Example \ref{e33}.
If the cost function $\$$ satisfies $\$(\nu) = \Omega(\nu)$ and
$\$(\nu) = O(e^{\sigma \nu})$ for some $\sigma\in (0,\infty)$,
then we have
\begin{equation*}
\lambda^{{\rm r}}(K^{\bg}_\infty) =
\min \left\{ r + \frac{1}{2},\, \frac{{\rm decay}(\bg) - 1}{2} \right\}
\hspace{3ex}\text{and}\hspace{3ex}
\lambda^{{\rm w}}(K^{\bg}_\infty) =
\min \left\{ r,\, \frac{{\rm decay}(\bg) - 1}{2} \right\}.
\end{equation*}
\end{corollary}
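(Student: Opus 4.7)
The plan is to apply Theorem~\ref{Theo_UB_PW} and reduce the corollary to the determination of the two univariate rates $\lambda^{\rm r}(1+k_1)$ and $\lambda^{\rm w}(1+k_1)$ for the Korobov kernel $k_1$ of Example~\ref{e33}. Once these are identified as $r+\tfrac12$ and $r$ respectively, substituting into \eqref{sharp_bound} immediately yields the two displayed formulas.

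For the worst-case rate, I would first invoke Remark~\ref{Rem_Rel_Min_Err} to rewrite $\lambda^{\rm w}(1+k_1) = \lambda^{\rm d}(1+k_1)$, so that it suffices to establish the classical identity $\lambda^{\rm d}(1+k_1) = r$. The upper bound can be produced by the rectangle rule at $N$ equispaced nodes, which by a standard Fourier calculation is exact on the trigonometric monomial $e_h(x) = e^{2\pi \mathrm{i} h x}$ unless $N\mid h$; then applying Cauchy--Schwarz to the remaining Fourier tail against the weights $|h|^{-2r}$ appearing in the Korobov norm gives error $O(N^{-r})$. The matching lower bound is the classical Korobov/Bakhvalov theorem for univariate trigonometric smoothness and may be taken from the literature.

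For the randomized rate I would rely on the folklore identity $\lambda^{\rm r}(1+k_1) = r+\tfrac12$, whose self-contained proof is already supplied in the appendix of the paper. The key ingredients are (i) a stratified sampling upper bound, where $[0,1)$ is split into $N$ equal subintervals and one considers $Q(f) = N^{-1}\sum_{j=1}^N f(X_j)$ with $X_j$ independent and uniform on the $j$-th subinterval, whose variance computed via the Fourier expansion of $f\in K_r$ is $O(N^{-(2r+1)})$, and (ii) a Bakhvalov-type lower bound using a family of ``bump'' functions in $K_r$ of unit norm with nearly disjoint supports, at least one of which any randomized algorithm of expected cost $N$ must miss with positive probability.

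The only nontrivial piece, were one to aim for a fully self-contained treatment, is the Bakhvalov lower bound in the randomized setting; everything else is either a direct invocation of Theorem~\ref{Theo_UB_PW} or a classical univariate estimate, so the entire body of the proof is a matter of substituting the two known rates into the formula from Theorem~\ref{Theo_UB_PW}.
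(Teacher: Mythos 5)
Your overall strategy is exactly the paper's: Corollary~\ref{Korobov} is obtained by substituting the two univariate rates into \eqref{sharp_bound} of Theorem~\ref{Theo_UB_PW}, with $\lambda^{\rm w}(1+k_1)=\lambda^{\rm d}(1+k_1)=r$ coming from Remark~\ref{Rem_Rel_Min_Err} plus the classical deterministic Korobov estimate (your rectangle-rule/aliasing argument is a fine way to make the upper bound explicit), and $\lambda^{\rm r}(1+k_1)=r+\tfrac12$ coming from the appendix. The lower-bound ingredient you describe (bump functions of small norm with disjoint supports, fed into a Bakhvalov-type argument) is precisely what the appendix does via Lemma~\ref{LowerBoundLemma} and the cited Corollary~7.35 of \cite{MGNR12}.

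There is, however, one genuine gap in the way you describe the randomized upper bound. Plain stratified sampling, $Q(f)=N^{-1}\sum_{j=1}^N f(X_j)$ with $X_j$ uniform on the $j$-th subinterval, does \emph{not} have variance $O(N^{-(2r+1)})$ for $r>1$: the variance of $f(X_j)$ on an interval of length $N^{-1}$ is generically of order $N^{-2}|f'|^2$ no matter how smooth $f$ is, so the estimator saturates at root-mean-square error $O(N^{-3/2})$, which matches $r+\tfrac12$ only for $r\le 1$. The appendix avoids this by a separation-of-the-main-part (control variate) construction: interpolate $f$ at $2N$ equispaced nodes by the trigonometric polynomial $q_N$ of Lemma~\ref{UpperBoundLemma}, integrate $q_N$ exactly (its integral is the discrete Fourier coefficient $\widehat{\alpha}_0$), and apply \emph{plain} Monte Carlo to the residual $g=f-q_N$, whose $L^2$ norm is $O(N^{-r})\lVert f\rVert_r$ by the aliasing estimate; this yields error $O(N^{-r-1/2})$ for every $r>\tfrac12$. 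Since you defer to the appendix for the self-contained proof the corollary still goes through, but the ingredient you name in its place would fail for $r>1$ and should be replaced by the control-variate scheme.
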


\begin{proof}
Since $\lambda^{{\rm r}}(1+k_1) = r+1/2$ and $\lambda^{{\rm w}}(1+k_1) = r$ (see
Appendix and Remark~\ref{Rem_Rel_Min_Err}),
Theorem \ref{Theo_UB_PW} immediately yields the result for $\lambda^{{\rm r}}(K^{\bg}_\infty)$ and $\lambda^{{\rm w}}(K^{\bg}_\infty)$.

Notice that the result for $\lambda^{{\rm w}}(K^{\bg}_\infty)$ can also be derived from
Remark~\ref{Rem_Rel_Min_Err} and \cite[Theorem~5.5]{GHHR17}.
\end{proof}

\begin{remark}\label{Baustelle}
Let us come back to Theorem~\ref{Theo_UB_PW}.

\nopagebreak

\begin{itemize}
\item[(i)] Algorithms that achieve convergence rates arbitrarily close to $\lambda^{\rm x}(K^{\bg}_\infty)$
are, e.g., \emph{multivariate decomposition methods (MDMs)} that were introduced in \cite{KSWW10a} (in the deterministic setting) and developed further in \cite{PW11} (in the deterministic and in the randomized setting); originally, these algorithms were called \emph{changing dimension algorithms}, cf., e.g., \cite{DG12, DG13, Gne13, KSWW10a, PW11}. MDMs exploit that the anchored function decomposition of an integrand can be efficiently computed; a method for multivariate integration based on the same idea is the \emph{dimension-wise integration method} proposed in \cite{GH10}.
To achieve (nearly) optimal convergence rates, the MDMs may employ as building blocks Smolyak algorithms for multivariate integration that rely on (nearly) optimal algorithms for univariate integration on $H(1+k_1)$, cf. \cite[Section~3.3]{PW11} and the proof of Theorem~\ref{Theo_UB_PW}.
\item[(ii)] In the special case where ${\rm x} = {\rm r}$ and where $k_1$ is an \emph{ANOVA-kernel} (i.e., $k_1$ satisfies $\int_D k_1(y,x) \,{\rm d}x = 0$ for every $y\in D$) a
version of Theorem~\ref{Theo_UB_PW} was already proved in \cite[Theorem~4.3]{DG13}.
It was the first result that rigorously showed that MDMs can achieve the optimal order of convergence also on spaces with norms that are \emph{not} induced by an underlying anchored function space decomposition. It was not derived with the help of function space embeddings, but by an elaborate direct analysis. Apart from addressing only the ANOVA setting, a further drawback of \cite[Theorem~4.3]{DG13} is that its
assumptions are slightly stronger than the ones made in Theorem~\ref{Theo_UB_PW}:
It is not sufficient to know the convergence rate of the $N$-th minimal errors of the univariate integration problem, but additionally one has to verify the existence of unbiased randomized algorithms for multivariate integration that satisfy certain variance bounds, see \cite[Assumption~4.1]{DG13}. Nevertheless, in many important cases it is well known that such variance bounds hold. Furthermore, one should mention that the analysis in \cite{DG13} ist not restricted to product weights as in this section, but is done for general weights.

Note that the kernel $k_1$ of the Korobov space $K_r([0,1))$ from Example~\ref{e33} and Corollary~\ref{Korobov} is actually an ANOVA kernel. Hence the identity for $\lambda^{{\rm r}}(K^{\bg}_\infty)$ in Corollary~\ref{Korobov} may also be derived by employing \cite[Theorem~4.3]{DG13} after verifying the existence of unbiased algorithms for multivariate integration that satisfy \cite[Assumption~4.1]{DG13}.

\item[(iii)]  The upper bound for $\lambda^{{\rm r}}(K^{\bg}_\infty)$ in
\eqref{sharp_bound} relies on the corresponding bound \cite[Eqn.~(21)]{Gne13}
for the case where the univariate reproducing kernel $k_1$ is anchored in $a$.
Although the definition of the cost function in \cite{Gne13} takes the average case and not the worst case point of view and differs therefore from \eqref{cost_function}, both definitions lead to the same cost for the admissable class of algorithms $\mathcal{A}^{\rm res}$  considered in the unrestricted subspace sampling model in \cite{Gne13}. The class
$\mathcal{A}^{\rm res}$ contains not only algorithms of the form \eqref{Qn}, but also adaptive and non-linear algorithms.
In the proof of Theorem~\ref{Theo_UB_PW} we employ the function space embeddings from \cite{GHHR17}, which allows us to transfer results for linear algorithms from the case of anchored kernels to the general case. Hence we can conclude that the upper bound for $\lambda^{{\rm r}}(K^{\bg}_\infty)$ in \eqref{sharp_bound} holds also if we admit adaptive linear algorithms of the form  \eqref{Qn} for infinite-dimensional as well as for univariate integration, but we do not know whether this is still the case if we admit non-linear algorithms.
\end{itemize}
\end{remark}

We finish this section with some remarks on extensions of our results on infinte-dimensional integration to other settings.

\begin{remark}\label{Generalizations}
To obtain computational tractability of problems depending on a high or infinite number of variables, it is
usually essential to be able to arrange the variables in such a way that their impact decays sufficiently fast. One approach to model the decreasing impact of successive variables is to use weighted function spaces, like the ones we defined and studied in this section, to moderate the influence of groups of variables. This approach goes back to the seminal paper \cite{SW98}. Another approach is the concept of \emph{increasing smoothness} with respect to properly ordered variables,
see, e.g., \cite{DG16, GHHRW18, HHPS18, IKPW16, KPW14a, PW10, Sie14}. The precise definition of Hilbert spaces of functions depending on infinitely many variables of increasing smoothness can be found in \cite[Section~3]{GHHRW18}. Now \cite[Theorem~3.19]{GHHRW18} shows how to relate these spaces to suitable weighted Hilbert spaces via  mutual embeddings,
making it therefore easy to transfer our results  in the randomized setting, Theorem \ref{Theo_UB_PW} and Corollary \ref{Korobov}, from weighted spaces to spaces with increasing smoothness, cf. \cite[Theorem~4.5 and Corollary~4.7]{GHHRW18} for the corresponding transference results in the deterministic setting.

Instead of applying our result Theorem \ref{CardInfUpBound}
to the infinite-dimensional integration problem, we may also use it to tackle the \emph{infinite-dimensional $L_2$-approximation problem}. Indeed, a sharp result for the latter problem was obtained in \cite[Corollary~9]{Was12} in the deterministic setting
for weighted anchored reproducing kernel Hilbert spaces  with the help of multivariate decomposition methods based on Smolyak algorithms (cf. \cite[Theorem~7]{WW11b}).
The analysis relies on explicit cost bounds for deterministic Smolyak algorithms from \cite{WW95}. In \cite[Theorem~4.5]{GHHRW18} the result is extended to weighted (not necessarily anchored) reproducing kernel Hilbert spaces  (relying on the embedding tools from \cite{GHHR17}) and to spaces of increasing smoothness.

Now one may use Theorem \ref{CardInfUpBound} to establish a corresponding result
to \cite[Corollary~9]{Was12} for weighted anchored spaces in the randomized setting
and may generalize it to non-anchored weighted spaces and to spaces of increasing smoothness via the embedding results established in \cite{GHHR17, GHHRW18}.

To work out all the details of these generalizations
is beyond the scope of the present paper.
\end{remark}

\section{Appendix}\label{app}
\subsection{Randomized Integration Error in Korobov Spaces}

For $r > \frac{1}{2}$ we denote via $K_{r} := K_{r}([0,1))$ the space of Korobov functions on a one-dimensional torus with smoothness parameter $r.$ The space is equipped with the norm
$$\lVert f \rVert^2_r := |\hat{f}(0)|^2 + \sum_{h \in \mathbb{Z} \setminus \{0\}} |\hat{f}(h)|^2 |h|^{2r},$$
see Example \ref{e33}.
It is a folklore result that the polynomial convergence order $\lambda^{\rm{r}}(1+k_1)$
of randomized quadratures on $K_r$ is equal to $ r + \frac{1}{2}.$
Since we have not found in the literature a complete proof handling all cases $r > \frac{1}{2}$, we decided to provide a proof sketch in this appendix.
Similar reasoning for (non-periodic) Sobolev spaces with integer parameter $r$ may be found, e.g., in \cite[Chapter ~2.2]{Nov88}.
For the lower error bound we need the following lemma.

\begin{lemma}\label{LowerBoundLemma}
Let $r > \frac{1}{2}.$ There exists a sequence of functions $(f_n)_{n \in \mathbb{N}} \in K_r$ and a constant $C > 0$ such that for every $n \in \mathbb{N}$
\begin{enumerate}
\item $\supp(f_n) \subset (0, n^{-1})$,
\item $\int_{[0,1)} f_n \, dx = n^{-r-1},$
\item $\lVert f_n \rVert^2_r \leq Cn^{-1}.$ \label{NormCondition}
\end{enumerate}
\end{lemma}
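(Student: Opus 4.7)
The plan is to construct each $f_n$ by rescaling a fixed smooth bump and to control the Korobov norm via Fourier coefficients, which lets the argument work uniformly for all $r>1/2$. I would fix once and for all a nonzero $\varphi\in C_c^\infty(\mathbb{R})$ with $\supp\varphi\subset(0,1)$ and $\int_\mathbb{R}\varphi(y)\,dy=1$, and set
$$
f_n(x):=n^{-r}\varphi(nx) \qquad (x\in[0,1)),
$$
viewed as a $1$-periodic function; the periodic extension coincides with the zero extension since $\supp f_n\subset(0,1/n)$. Property (1) is then immediate, and for property (2) the substitution $y=nx$ gives
$$
\int_0^1 f_n(x)\,dx \;=\; n^{-r-1}\int_0^1\varphi(y)\,dy \;=\; n^{-r-1}.
$$

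For property (3) I would pass to Fourier coefficients. The same substitution yields
$$
\hat{f}_n(h) \;=\; n^{-r-1}\,\hat{\varphi}(h/n), \qquad h\in\mathbb{Z},
$$
where $\hat{\varphi}(\xi):=\int_0^1\varphi(y)e^{-2\pi i\xi y}\,dy$ is the Fourier transform of $\varphi$ on the real line. Inserting this into the definition of $\|\cdot\|_r$ and writing $|h|^{2r}=n^{2r}|h/n|^{2r}$ leads to
$$
\|f_n\|_r^2 \;=\; n^{-2r-2}|\hat{\varphi}(0)|^2 \;+\; n^{-2}\sum_{h\neq 0} G(h/n), \qquad G(\xi):=|\hat{\varphi}(\xi)|^2|\xi|^{2r}.
$$

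The remaining task is the uniform bound $\sum_{h\neq0}G(h/n)=O(n)$; this is the main technical step. Since $\varphi\in C_c^\infty$, $\hat{\varphi}$ is Schwartz, so $G$ decays faster than any polynomial and in particular $G(\xi)\leq C(1+|\xi|)^{-2}$. Comparing $\sum_{h\geq 1}(1+h/n)^{-2}$ with the integral $n^2\int_0^\infty(n+x)^{-2}\,dx=n$ yields the desired bound, so property (3) follows with a constant depending only on $r$ and $\varphi$. The one conceptual pitfall would be trying to handle non-integer $r$ via weak derivatives; the Fourier route sidesteps this entirely by encoding smoothness through the weights $|h|^{2r}$.
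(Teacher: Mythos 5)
Your proposal is correct and follows essentially the same route as the paper: rescale a fixed smooth bump as $f_n(x)=n^{-r}\varphi(nx)$, compute $\hat f_n(h)=n^{-r-1}\hat\varphi(h/n)$, and use the Schwartz decay of $\hat\varphi$ together with a sum-to-integral comparison to get $\sum_{h\neq 0}|\hat f_n(h)|^2|h|^{2r}=O(n^{-1})$. The only cosmetic difference is that you absorb the factor $|\xi|^{2r}$ into a single bound $G(\xi)\le C(1+|\xi|)^{-2}$ before comparing with $\int_0^\infty (n+x)^{-2}\,dx$, whereas the paper keeps the $r$-dependent decay $|\hat\varphi(\xi)|\le \tilde C(1+|\xi|)^{-(r+1)}$ explicit and compares with $\int_0^\infty s^{2r}(1+s)^{-2r-2}\,ds$; both are valid.
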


\begin{proof}
Let $f$ be a positive infinitely many times differentiable function with $ \supp(f) \subset (0,1)$ and integral equal to $1.$ Define
$$f_n(x) := n^{-r} f(nx), n \in \mathbb{N}.$$ The sequence $(f_n)_{n \in \mathbb{N}}$ is the sequence we are looking for. Since all other properties are obvious it is enough to check that for $(f_n)_n, n \in \mathbb{N}$ the condition (\ref{NormCondition}) holds. Fix $n.$ Since $f$ (as a bump function) is in the Schwartz space $\mathcal{S}(\mathbb{R}),$ the same holds for its Fourier transform $\hat{f}$ and as a result there exists $\tilde{C}>0$ such that for every $x \in \mathbb{R}$
$$|\hat{f}(x)|^2 \leq \left( \frac{\tilde{C}}{(1+|x|)^{r+1}}  \right)^2.$$
Since $|\hat{f_n}(0)|^2 \leq n^{-3}$ we may neglect it. Simple calculations reveal
\begin{align*}
& \sum_{h \in \mathbb{Z} \setminus \{0\}} |\hat{f_n}(h)|^2 |h|^{2r} = \frac{1}{n}  \sum_{h \in \mathbb{Z} \setminus \{0\}} \frac{1}{n} |\hat{f}(\frac{h}{n})|^2 \left(\frac{|h|}{n}\right)^{2r}
\\
& \leq \frac{1}{n}  \sum_{h \in \mathbb{Z} \setminus \{0\}} \frac{1}{n}   \left( \frac{\tilde{C}}{(1+\frac{|h|}{n})^{r+1}}  \right)^2         \left(\frac{|h|}{n}\right)^{2r} =: \frac{1}{n} \rho_n.
\end{align*}
Due to monotonicity of $\mathbb{N} \ni h \mapsto \frac{1}{n} \left( \frac{\tilde{C}}{(1 + \frac{h}{n})^{r+1}}  \right)^2\left( \frac{h}{n} \right)^{2r}$ we obtain
\begin{align*}
& \rho_n \leq 2 \frac{1}{n} \left( \frac{\tilde{C}}{(1 +\frac{1}{n})^{r+1}}   \right)^2\left( \frac{1}{n} \right)^{2r} + 2 \int_{0}^{\infty} \frac{1}{n} \left( \frac{\tilde{C}}{(1 + \frac{t}{n})^{r+1}}  \right)^2\left( \frac{t}{n} \right)^{2r} \, dt
\\
& \leq 2\tilde{C}^2\left( 1 + \int_{0}^{\infty} \frac{s^{2r}}{(1 + s)^{2r + 2}}  \, ds    \right),
\end{align*}
which finishes the proof.
\end{proof}

For the upper bound we need a lemma on the $L^2-$approximation of functions from Korobov spaces by trigonometric polynomials.

\begin{lemma}\label{UpperBoundLemma}
Let $r > \frac{1}{2}, f \in K_r, N \in \mathbb{N}$, and let $q_N$ be the trigonometric polynomial
defined by
\begin{equation*}
q_N(x) := \sum_{k=1-N}^N \widehat{\alpha}_k e^{2\pi i kx},
\hspace{3ex} x\in [0,1),
\end{equation*}
where
\begin{equation*}
 \widehat{\alpha}_k := \frac{1}{2N} \sum^{2N-1}_{j=0} f \left( \tfrac{j}{2N} \right)  e^{-\pi i kj/N}
 \hspace{3ex}\text{for $k=1-N, 2-N, \ldots, 0 ,1,\ldots,N$,}
\end{equation*}
are the $2N$ discrete Fourier coefficients of $f$.
Then we have
\begin{equation*}
f \left( \tfrac{j}{2N} \right) = q_N \left( \tfrac{j}{2N} \right)
\hspace{3ex}\text{for $j=0,1,\ldots,2N-1$,}
\end{equation*}
and
\begin{equation*}
\lVert f - q_N \rVert^2_{L^2([0,1))} \leq (1+C_r) N^{-2r} \lVert f \rVert^2_r,
\hspace{3ex}\text{where $C_r := 2\sum_{\ell=1}^\infty (2\ell -1)^{-2r}$.}
\end{equation*}
The discrete Fourier coefficients $\widehat{\alpha}_k$, $k=1-N, 2-N, \ldots, 0 ,1,\ldots,N$, can be computed via the fast Fourier transform at cost $O(N \ln(N))$.
\end{lemma}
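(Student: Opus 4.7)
The plan is to establish the three claims separately. For the interpolation identity, I would note that $e^{-\pi i kj/N} = e^{-2\pi i kj/(2N)}$, so $\widehat{\alpha}_k$ is the standard discrete Fourier transform of the samples $f(j/(2N))$, $j=0,\ldots,2N-1$. Substituting the definition of $\widehat{\alpha}_k$ into $q_N(j/(2N))$ and swapping the order of summation reduces the claim to the orthogonality relation
\begin{equation*}
\sum_{k=1-N}^{N}e^{\pi i k(j-j')/N} = 2N\,\delta_{jj'}, \quad j, j' \in \{0,1,\ldots,2N-1\},
\end{equation*}
which is a direct geometric-sum computation, using that $\{1-N,\ldots,N\}$ is a complete set of residues modulo $2N$.

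For the error bound, I would first observe that $r>1/2$ together with $f\in K_r$ forces $\hat f\in\ell^1(\Z)$ via Cauchy--Schwarz, so all Fourier expansions can be used freely. By Parseval,
\begin{equation*}
\|f-q_N\|_{L^2}^2 = \sum_{k=1-N}^{N}|\hat f(k)-\widehat{\alpha}_k|^2 + \sum_{k\notin\{1-N,\ldots,N\}}|\hat f(k)|^2.
\end{equation*}
The second (truncation) sum is at once bounded by $N^{-2r}\|f\|_r^2$, since every index $k$ appearing there satisfies $|k|\ge N$. For the first (aliasing) sum, the key ingredient is the aliasing identity $\widehat{\alpha}_k = \sum_{\ell\in\Z}\hat f(k+2N\ell)$, obtained by inserting the Fourier series of $f$ into the definition of $\widehat{\alpha}_k$ and collapsing the inner sum using that $\frac{1}{2N}\sum_{j=0}^{2N-1} e^{2\pi i(m-k)j/(2N)}$ equals $1$ if $m\equiv k\pmod{2N}$ and $0$ otherwise. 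Consequently $\hat f(k)-\widehat{\alpha}_k=-\sum_{\ell\ne 0}\hat f(k+2N\ell)$.

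Applying Cauchy--Schwarz to this series with the weight splitting $|m|^{r}\cdot|m|^{-r}$ yields
\begin{equation*}
|\hat f(k)-\widehat{\alpha}_k|^2 \le \Bigl(\sum_{\ell\ne 0}|k+2N\ell|^{-2r}\Bigr)\Bigl(\sum_{\ell\ne 0}|\hat f(k+2N\ell)|^2|k+2N\ell|^{2r}\Bigr).
\end{equation*}
The geometric factor is bounded uniformly in $k\in\{1-N,\ldots,N\}$ by the elementary estimate $|k+2N\ell|\ge 2N|\ell|-|k|\ge(2|\ell|-1)N$, which gives $\sum_{\ell\ne 0}|k+2N\ell|^{-2r}\le C_r N^{-2r}$. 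Summing the remaining factor over $k\in\{1-N,\ldots,N\}$ reindexes each $m\notin\{1-N,\ldots,N\}$ exactly once, so the total is at most $\|f\|_r^2$. Combined with the truncation term this delivers $(1+C_r)N^{-2r}\|f\|_r^2$. The FFT claim is the classical Cooley--Tukey result applied to the $2N$-point DFT and requires no further argument. I do not anticipate a genuine obstacle; the only minor point requiring care is the slightly asymmetric residue system $\{1-N,\ldots,N\}$, which is accommodated by the uniform lower bound $|k+2N\ell|\ge(2|\ell|-1)N$.
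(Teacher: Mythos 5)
Your proposal is correct and follows essentially the same route as the paper's proof: the aliasing identity $\widehat{\alpha}_k=\sum_{\ell\in\Z}\hat f(k+2N\ell)$, the Parseval split into an aliasing sum plus a truncation tail, Cauchy--Schwarz with the weight $|m|^{r}\cdot|m|^{-r}$, and the uniform bound $|k+2N\ell|\ge(2|\ell|-1)N$ producing the constant $C_r$. The only difference is cosmetic — you derive the interpolation and aliasing identities directly from the orthogonality of the discrete characters, whereas the paper cites them from a standard text.
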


\begin{proof}
Proofs of the statements of the lemma can be found in many standard texts on numerical analysis.
We follow the course of \cite[Sections~52 and 53]{Han02}.
Since $r > \frac{1}{2}$ we may write $f$ as a uniformly convergent Fourier series
$$f(x) = \sum_{h \in \mathbb{Z}} \hat{f}(h)e^{2 \pi i h x}.$$
It is well-known (and not difficult to calculate) that if $q_N$ is a trigonometric polynomial of degree $N$ interpolating $f$ in the nodes $\frac{j}{2N}, j=0,1,\ldots, 2N-1$ then it is given by
$$q_N(x)  =  \sum_{j = 1-N}^{N} \left( \sum_{\ell \in \mathbb{Z}}  \hat{f}(j + 2\ell N) \right) e^{2 \pi i j x},$$
see for example Lemma $52.5$ in \cite{Han02}. It holds
\begin{equation*}
\begin{split}
\|f - q_N \|^2_{L^2([0,1))} &= \sum_{j = 1-N}^{N} \left|  \widehat{\alpha}_j - \hat{f}(j) \right|^2
+ \sum_{j = - \infty}^{-N} \left| \hat{f}(j) \right|^2 + \sum_{j = N+1}^{\infty} \left| \hat{f}(j) \right|^2 \\
&\le \sum_{j = 1-N}^{N} \Bigg| \sum_{|\ell |\geq 1}  \hat{f}(j + 2\ell N) \Bigg|^2
+ \sum_{|j| \ge N} \left| \hat{f}(j) \right|^2.
\end{split}
\end{equation*}
The last sum may be bounded in an obvious way by $N^{-2r}\lVert f \rVert^2_r$ and for the double sum we may use the Cauchy Schwarz inequality to obtain
\begin{equation*}
\begin{split}
\sum_{j = 1-N}^N \Bigg| \sum_{|\ell | \geq 1} \hat{f}(j + 2\ell N) \Bigg|^2
&= \sum_{j = 1-N}^N \Bigg| \sum_{|\ell | \geq 1} \frac{1}{(j+2\ell N)^r} \hat{f}(j+2\ell N) (j+2\ell N)^r  \Bigg|^2
\\
&\leq \sum_{j = 1-N}^N \left( \sum_{|\ell | \geq 1} (j + 2\ell N)^{-2r} \right)
\left( \sum_{|\ell | \geq 1} \left| \hat{f}(j + 2\ell N) \right|^2 (j+2\ell N)^{2r} \right)
\\
&\leq \left( \sum_{|\ell | \geq 1} (2\ell N - N)^{-2r} \right)
\left( \sum_{k \in \Z} \left| \hat{f}(k) \right|^2 k^{2r} \right)\\
&\leq N^{-2r} \left( 2\sum_{\ell=1}^\infty (2\ell -1)^{-2r} \right) \|f\|^2_r.
\end{split}
\end{equation*}
The cost analysis of the fast Fourier transform is well known and can, e.g., be found in \cite[Section~53]{Han02}.
\end{proof}

\begin{theorem}
Let $r > \frac{1}{2}.$ It holds
$$\lambda^{\rm{r}}(1+k_1) = r + \frac{1}{2}.$$
\end{theorem}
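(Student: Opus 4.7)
The plan is to establish the two inequalities $\lambda^{\rm r}(1+k_1) \ge r+1/2$ and $\lambda^{\rm r}(1+k_1) \le r+1/2$ separately. The upper bound I would prove by exhibiting an explicit control-variate randomised quadrature built on the trigonometric interpolant of Lemma~\ref{UpperBoundLemma}. The lower bound I would prove by a Bakhvalov-style averaging argument using the bump family of Lemma~\ref{LowerBoundLemma}, combined with a ratio trick to pass from the average-norm bound that the construction naturally supplies to the worst-case bound over the unit ball of $K_r$ that the definition of $e^{\rm r}$ requires.

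For the upper bound, given $N \in \N$ and $f \in K_r$ I would define
\begin{equation*}
Q_N(f) := \widehat{\alpha}_0 + \frac{1}{N} \sum_{i=1}^{N} \bigl( f(X_i) - q_N(X_i) \bigr),
\end{equation*}
where $q_N$ is the interpolant of Lemma~\ref{UpperBoundLemma} assembled from the deterministic values $f(j/(2N))$, $j=0,\ldots,2N-1$, and $X_1,\ldots,X_N$ are i.i.d.\ uniform on $[0,1)$ independent of those knots. Since $\int_0^1 q_N = \widehat{\alpha}_0$ the estimator is unbiased, so
\begin{equation*}
\Expec|I_1(f) - Q_N(f)|^2 = \frac{1}{N}\Var\bigl(f(X_1) - q_N(X_1)\bigr) \le \frac{\|f-q_N\|_{L^2([0,1))}^2}{N} \le \frac{1+C_r}{N^{\,2r+1}}\,\|f\|_r^2
\end{equation*}
by Lemma~\ref{UpperBoundLemma}. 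The total number of function evaluations is at most $3N$, so the randomised worst-case error of $Q_N$ over the unit ball of $K_r$ is $O(N^{-r-1/2})$, yielding $\lambda^{\rm r}(1+k_1) \ge r+1/2$.

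For the lower bound, fix a randomised quadrature $Q$ of the form \eqref{Qn} using at most $M$ function evaluations, set $n := 2M$, and form the translated bumps $f_n^{(j)}(x) := f_n\bigl(x - (j-1)/n\bigr)$, $j=1,\ldots,n$, from Lemma~\ref{LowerBoundLemma}, which have pairwise disjoint supports $\bigl((j-1)/n, j/n\bigr) \subset [0,1)$. For a uniformly random sign vector $\epsilon \in \{-1,+1\}^n$ set $h_\epsilon := \sum_{j=1}^{n} \epsilon_j f_n^{(j)}$; a short Fourier calculation via $\widehat{f_n^{(j)}}(k) = \widehat{f_n}(k)\,e^{-2\pi i k (j-1)/n}$ combined with the norm bound in Lemma~\ref{LowerBoundLemma} gives $\Expec_\epsilon\|h_\epsilon\|_r^2 = n\|f_n\|_r^2 \le C$, while $I_1(h_\epsilon) = n^{-r-1}\sum_{j=1}^n \epsilon_j$. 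For each realisation $\omega$ of the algorithm's randomness the set $U(\omega)$ of bumps containing at least one query point has $|U(\omega)| \le M$, and $Q(h_\epsilon)(\omega)$ depends only on $(\epsilon_j)_{j \in U(\omega)}$. Conditioning on those signs leaves $n^{-r-1}\sum_{j \notin U(\omega)} \epsilon_j$ as a centred sum of independent Rademachers orthogonal to the remaining output, whence
\begin{equation*}
\Expec_\omega \Expec_\epsilon |I_1(h_\epsilon) - Q(h_\epsilon)|^2 \ge n^{-2r-2}\,\Expec_\omega(n - |U(\omega)|) \ge 2^{-2r-2}\,M^{-2r-1}.
\end{equation*}

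The main obstacle is that the $h_\epsilon$ do not all lie in the unit ball of $K_r$: only the second moment $\Expec\|h_\epsilon\|_r^2$ is bounded, so I cannot normalise a single $h_\epsilon$ and plug it into the worst-case supremum defining $e^{\rm r}(I_1, Q)$. I would dispose of this via a ratio trick: writing $F_\epsilon := \Expec_\omega|I_1(h_\epsilon)-Q(h_\epsilon)|^2$ and $N_\epsilon := \|h_\epsilon\|_r^2 > 0$, the elementary inequality $\sup_\epsilon F_\epsilon/N_\epsilon \ge \Expec_\epsilon F_\epsilon / \Expec_\epsilon N_\epsilon$ produces some $\epsilon^\ast$ with $F_{\epsilon^\ast}/N_{\epsilon^\ast} \ge (2^{-2r-2}/C)\,M^{-2r-1}$; setting $f := h_{\epsilon^\ast}/\sqrt{N_{\epsilon^\ast}}$ gives $\|f\|_r = 1$ and, by linearity of $Q$, $\Expec_\omega|I_1(f)-Q(f)|^2 \ge (2^{-2r-2}/C)\,M^{-2r-1}$. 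Hence $e^{\rm r}(I_1, Q) \ge c\,M^{-r-1/2}$, so $\lambda^{\rm r}(1+k_1) \le r+1/2$, and combining with the upper bound proves the claim.
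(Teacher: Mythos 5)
Your proposal is correct. The upper error bound (establishing $\lambda^{\rm r}(1+k_1)\ge r+\tfrac12$) is exactly the paper's argument: the same control-variate estimator $\widehat{\alpha}_0+\frac1N\sum_i(f(X_i)-q_N(X_i))$ built from Lemma~\ref{UpperBoundLemma}, the same unbiasedness-plus-variance computation, the same $O(N)$ information cost. Where you genuinely diverge is the lower error bound. The paper disposes of it in one line by translating the bumps of Lemma~\ref{LowerBoundLemma} into $n=6N$ disjoint copies and invoking Corollary~7.35 of \cite{MGNR12}, which packages precisely the Bakhvalov-type averaging you carry out by hand; you instead give a self-contained proof via Rademacher randomization of the signs, the conditioning/orthogonality step $\Expec_\epsilon|X+Y|^2\ge\Expec_\epsilon Y^2$ for the unqueried bumps, and the ratio trick $\sup_\epsilon F_\epsilon/N_\epsilon\ge\Expec_\epsilon F_\epsilon/\Expec_\epsilon N_\epsilon$ to convert the second-moment bound $\Expec_\epsilon\|h_\epsilon\|_r^2=n\|f_n\|_r^2\le C$ into a genuine unit-ball fooling function. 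All of these steps check out (in particular $|U(\omega)|\le M$ holds because the number of knots in \eqref{Qn} is fixed and non-adaptive, and $\|h_\epsilon\|_r>0$ makes the ratio trick legitimate), and your choice $n=2M$ versus the paper's $n=6N$ is immaterial. What the paper's route buys is brevity and an explicit clean constant; what yours buys is transparency — the reader sees exactly why linearity and non-adaptivity of the admissible algorithms enter, without consulting an external (German-language) textbook. Either version is a complete proof of the theorem.
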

\begin{proof}
The upper bound on $\lambda^{\rm{r}}(1+k_1)$ is settled immediately by Lemma \ref{LowerBoundLemma} in conjunction with Corollary $7.35$ from \cite{MGNR12}. Indeed,
for $N\in \N$ choose $n:= 6N$ and
$$g_k(x) = f_{n} \big( x - \tfrac{k-1}{n} \big), \hspace{3ex} k=1,\ldots, n.$$
Then the assumptions of
Corollary $7.35$ from \cite{MGNR12} are satisfied for $\epsilon = n^{-r-1}$
implying $e^{\rm r}(N, 1+k_1) \ge \tfrac{1}{6^{r+1}\sqrt{2}}N^{-r-1/2}$, and thus
establishing the upper bound.

To get the lower bound on $\lambda^{\rm{r}}(1+k_1)$ consider the following algorithm. Let $N \in \mathbb{N}.$ We first interpolate $f \in K_r$ with the trigonometric polynomial $q_N$ from Lemma~\ref{UpperBoundLemma}. Now the integral over $q_N$ is simply the discrete Fourier coefficient $\widehat{\alpha}_0$.
To approximate the integral of $g:=(f - q_N)$ we apply a simple Monte Carlo quadrature. To this end let $(X_j)_{j = 0}^{N-1}$ be independent random variables such that $X_j$ is distributed uniformly on $[0,1)$. We put
$$Qg = \frac{1}{N}\sum_{j = 0}^{N-1} g(X_{j}).$$
Now, since $Qg$ is unbiased and $(X_j)_{j = 0}^{N-1}$ are independent, applying Lemma \ref{UpperBoundLemma} we get
\begin{equation}
 \Expec \left[  (Qg - \int_{[0,1)} g \, dx)^2 \right]^{\frac{1}{2}}  \leq \frac{1}{N^{\frac{1}{2}}} \lVert g \rVert_{L^2([0,1))}  \leq \frac{\sqrt{1+C_r}}{N^{\frac{1}{2}} N^{r}} \lVert f \rVert_r.
\end{equation}
Since cost of the algorithm is of order  $O(N\ln(N))$, the claim follows.
\end{proof}

\subsection*{Acknowledgment}

The authors thank Stefan Heinrich for pointing out the reference \cite{HM11}.
Part of the work was done while the authors were visiting the Mathematical Research and Conference Center Bedlewo in Autumn 2016 and the Erwin Schr\"odinger Institute for Mathematics and Physics (ESI) in Vienna in Autumn 2017. Both authors acknowledge support by the Polish Academy of Sciences;
Marcin Wnuk additionally acknowledges support by the German Academic Exchange Service (DAAD).

\end{document}